\let\oldtocsection=\tocsection 
\let\oldtocsubsection=\tocsubsection 
\renewcommand{\tocsection}[2]{\hspace{0mm}\oldtocsection{#1}{#2}}
\renewcommand{\tocsubsection}[2]{\hspace{1em}\oldtocsubsection{#1}{#2}}
\DeclareFontFamily{OT1}{pzc}{}
\DeclareFontShape{OT1}{pzc}{m}{it}{<-> s * [1.100] pzcmi7t}{}
\DeclareMathAlphabet{\mathpzc}{OT1}{pzc}{m}{it}
    \patchcmd{\section}{\scshape}{\large\bfseries}{}{}
    \renewcommand{\@secnumfont}{\bfseries}
\numberwithin{equation}{section}
\newtheorem{theorem}{Theorem}[section]
\newtheorem*{theorem*}{Theorem}
\newtheorem{corollary}[theorem]{Corollary}
\newtheorem*{corollary*}{Corollary}
\newtheorem{lemma}[theorem]{Lemma}
\newtheorem*{lemma*}{Lemma}
\theoremstyle{definition}
\newtheorem{construction}{Construction}
\newtheorem{definition}[theorem]{Definition}
\newtheorem{remark}[theorem]{Remark}
\def\ZZ{\mathbb{Z}}
\newcommand{\sslash}{\mathbin{/\mkern-6mu/}}
\newcommand{\TopMon}{\textsf{TopMon}}
\newcommand{\Deltainj}{\Delta_{\text{inj}}}
\newcommand{\id}{\textsf{id}}
\newcommand{\Top}{\textsf{Top}}
\newcommand{\Gpd}{\textsf{Gpd}}
\newcommand{\B}{\textsf{B}}
\newcommand{\sSet}{\textsf{sSet}}
\newcommand{\sTop}{\textsf{sTop}}
\newcommand{\Set}{\textsf{Set}}
\newcommand{\hocolim}{\textsf{hocolim}}
\newcommand{\mm}{\mathcal{M}}
\newcommand{\uu}{\mathcal{U}}
\newcommand{\cc}{\mathcal{C}}
\newcommand{\OO}{\mathcal{O}}
\begin{document}

\title[The operad associated to a crossed simplicial group
]{The operad associated to a crossed simplicial group
}

\author{A. Semidetnov}
\email{artemsemidetnov@gmail.com, Artem.Semidetnov@etu.unige.ch}
\address{Universit\'{e} de Gen\`{e}ve, Section de Math\'{e}matiques, Route de Drize 7, Villa Battelle, 1227 Carouge, Switzerland}

\begin{abstract}
    We introduce and study structured enhancement of the notion of a crossed simplicial group, which we call an operadic crossed simplicial group. We show that with each operadic crossed simplicial group one can associate a certain operad in groupoids. We demonstrate that symmetric and braid crossed simplicial groups can be made into operadic crossed simplicial groups in a natural way. For these two examples, we show that our construction of the associated operad recovers the $E_\infty$-operad and the $E_2$-operad respectively. We demonstrate the utility of this framework through two main applications: a generalized bar construction that specializes to Fiedorowicz's symmetric and braided bar constructions, and an identification of the associated group-completed monads with Baratt-Priddy-Quillen type spaces. 
\end{abstract}

\maketitle

% \tableofcontents

\section*{Introduction}

Crossed simplicial groups were introduced by Fiedorowicz and Loday in ~\cite{f0e21457-c8c3-31f9-bd48-762cf5f699ae} and independently by Krasauskas in ~\cite{krasauskas_skew-simplicial_1987}. We use notations established in ~\cite{f0e21457-c8c3-31f9-bd48-762cf5f699ae}. 

% Briefly, a \emph{crossed simplicial group} is a simplicial set \( G_* \) where each \( G_n \) is a group. It generalizes the notion of a simplicial group by relaxing the condition on the simplicial maps: the face and degeneracy maps \( d_i, s_i \) are \emph{crossed homomorphisms} rather than group homomorphisms.
A~ crossed simplicial group is a generalization of a simplicial group.
In short, it is a simplicial set $G_*$ such that each $G_n$ is a group that acts on the set $[n]$ of vertices of the $n$-simplex, and the face and degeneracy maps are crossed homomorphism with respect to the specified action. The latter means that for every elements \(g_1, g_2 \in G_n\) there are identities
% That is, for \( g_1, g_2 \in G_n \), they satisfy the relations
\[
d_i(g_1g_2) = d_i(g_1)d_{g_1^{-1}i}(g_2), \quad s_i(g_1g_2) = s_i(g_1)s_{g_1^{-1}i}(g_2).
\]
Equivalently, a crossed simplicial group can be given as an extension \( \Delta G \) of the simplicial category \( \Delta \), where every morphism \( [m] \to [n] \) in \( \Delta G \) factors uniquely as an automorphism of \( [m] \) given by the action of an element of \( G_m \), followed by a morphism from \( \Delta \). 
The structural decomposition theorem for crossed simplicial groups is established in \cite{f0e21457-c8c3-31f9-bd48-762cf5f699ae} and \cite{krasauskas_skew-simplicial_1987} independently: for a crossed simplicial group $G_*$ there is the canonical epimorphism $\pi \colon G \twoheadrightarrow N_*$ to one of the 7 specific crossed simplicial groups with the kernel being a genuine simplicial group.
We rely on this theorem heavily and call the morphism $\pi$ the \textit{structural projection} for $G_*$.

A~key application of this theory, explored in a preprint by Fiedorowicz ~\cite{Fied-preprint}, is a generalization of the bar construction. For the crossed simplicial groups $S_*$ (built from symmetric groups), $B_*$ (built from Artin braid groups), he introduces the symmetric bar construction and the braided bar construction:
$$\B^S \colon \TopMon \to \Top^{\Delta S} \text{,} \qquad \B^B \colon \TopMon \to \Top^{\Delta B}.$$ Motivated by homological considerations, he proves that for a topological monoid $M$ there are weak equivalences 
$$\hocolim_{\Delta S} (\B^S M) \simeq \B(T_\infty, T_1, M), \qquad \hocolim_{\Delta B} (\B^B M) \simeq \B(T_2, T_1, M),$$
where $T_k$ denotes the monad associated to the little $k$-cubes operad, and $\B(-, -, -)$ denotes the monadic bar construction.

% In the present paper, we generalize these results. The generalization revolves around building classifying spaces for crossed simplicial groups. For a crossed simplicial group $G_*$ admitting the structural decomposition of the form 
% $$P_* \hookrightarrow G_* \twoheadrightarrow N_*,$$
% with $P_*$ being a genuine simplicial group we construct classifying spaces for $P_*$ with $N_*$-symmetries. More precisely, we define a functor with values in groupoids $\Gamma_* : \Delta N \to \Gpd$ such that $B\Gamma_n$ has the homotopy type $K(P_n, 1)$. 
\subsection*{Overview of results}
We present a construction for the classifying space of a crossed simplicial group that is compatible with the structural decomposition.
Let \(G_*\) be a crossed simplicial group and \(\pi\) be its structural projection.
We construct a model for the classifying space of the simplicial group \(P_* = \mathrm{ker}(\pi)\) equipped with an action of the crossed simplicial group \(N_* = \mathrm{cod}(\pi)\). 
% More precisely, we define a groupoid-valued functor
% \[
% \Gamma_* \colon \Delta N \to \Gpd,
% \]
% such that \( B\Gamma_n \) has the homotopy type of \( K(P_n, 1) \).
Namely, we prove the following theorem. 
\begin{theorem*}[Theorem~\ref{thm:groupoid_construction}]
Let \( G_* \) be a crossed simplicial group with the structural decomposition
\[
P_* \hookrightarrow G_* \overset{\pi}{\twoheadrightarrow} N_*.
\]
Then there exists a functor \( \Gamma_* \colon (\Delta N)^{\text{op}} \to \Gpd \) such that \( N_n \) acts freely on \( \Gamma_n \) and there is a homotopy equivalence \( \B\Gamma_n \simeq K(P_n, 1) \) for each \( n \).
\end{theorem*}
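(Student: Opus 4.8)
The plan is to exhibit each $\Gamma_n$ as an action groupoid that both models $K(P_n,1)$ and carries a built-in free $N_n$-action, and then to promote the level-wise assignment $n\mapsto\Gamma_n$ to a functor on $(\Delta N)^{\mathrm{op}}$ by means of the structural description of $\Delta G$ and $\Delta N$ recalled above.

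First I would define the groupoid. Since $\pi$ is an epimorphism of crossed simplicial groups with kernel $P_*$, each $\pi_n\colon G_n\twoheadrightarrow N_n$ identifies $N_n$ with $G_n/P_n$, so $G_n$ acts on the \emph{set} $N_n$ through $\pi_n$ followed by left translation. Let $\Gamma_n:=N_n\sslash G_n$ be the corresponding action groupoid: an object is an element $\sigma\in N_n$, a morphism $\sigma\to\sigma'$ is a pair $(\sigma,g)$ with $g\in G_n$ and $\pi_n(g)\sigma=\sigma'$, and composition multiplies the $G_n$-labels. Equivalently, $\Gamma_n$ is the strict pullback of $\B\pi_n\colon\B G_n\to\B N_n$ along the universal covering groupoid $\mathsf{E}N_n\to\B N_n$, which (the latter being an isofibration) also computes the homotopy pullback. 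Two of the three claims are then immediate. Because $\pi_n$ is surjective, $G_n$ acts transitively on $N_n$ with stabilizer of the unit equal to $P_n$, so $\Gamma_n$ is connected with every automorphism group isomorphic to $P_n$; the inclusion of the full subgroupoid on the unit, which is $\ast\sslash P_n$, is an equivalence, hence $\B\Gamma_n\simeq\B(\ast\sslash P_n)=K(P_n,1)$. And right translation of $N_n$ on itself commutes with the left $G_n$-action through $\pi_n$, so each $\sigma_0\in N_n$ acts on $\Gamma_n$ by $(\sigma,g)\mapsto(\sigma\sigma_0,g)$; this is a strict action, free on objects and hence free, whose quotient groupoid is $\B G_n$.

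It remains to make $n\mapsto\Gamma_n$ functorial over $(\Delta N)^{\mathrm{op}}$, restricting on automorphisms to the action just described. Here I would use the following standard consequences of the structure theorem: every morphism $\beta\colon[m]\to[n]$ of $\Delta N$ is uniquely $\phi_\beta\circ\sigma_\beta^{\,\ast}$ with $\phi_\beta\colon[m]\to[n]$ in $\Delta$ and $\sigma_\beta\in N_m$ (and likewise in $\Delta G$); and for $\phi\colon[m]\to[n]$ in $\Delta$ and $g\in G_n$ one has $g^{\ast}\circ\phi=(g\cdot\phi)\circ(g|_\phi)^{\ast}$ for some $g\cdot\phi$ in $\Delta$ and $g|_\phi\in G_m$ assembled from the faces and degeneracies of $G_*$, these satisfying $g|_{\phi\circ\psi}=(g|_\phi)|_\psi$, the crossed-homomorphism rule $(g_1g_2)|_\phi=(g_1|_{g_2\cdot\phi})\,(g_2|_\phi)$, and $\pi_m(g|_\phi)=\pi_n(g)|_\phi$; finally, since $P_*$ is a genuine simplicial group, $\pi_n(g)=e$ forces $g|_\phi\in P_m$ to act trivially on $[m]$, so $g\cdot\phi=\phi$ depends only on $\pi_n(g)$, and hence $\sigma\cdot\phi$ makes sense for $\sigma\in N_n$. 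I would then define $\Gamma(\beta)\colon\Gamma_n\to\Gamma_m$ by $\sigma\mapsto(\sigma|_{\phi_\beta})\,\sigma_\beta$ on objects and $(\sigma,g)\mapsto\bigl((\sigma|_{\phi_\beta})\sigma_\beta,\ g|_{\sigma\cdot\phi_\beta}\bigr)$ on morphisms; note that the $G_m$-label of $\Gamma(\beta)(\sigma,g)$ genuinely depends on the source $\sigma$, as it may for a functor between action groupoids. The crossed-homomorphism rule applied to $(-)|_{\phi_\beta}$ shows this has the correct target and is compatible with composition of morphisms in $\Gamma_n$, and taking $\beta=\sigma_0\in N_n=\Delta N([n],[n])$ (so $\phi_\beta=\mathrm{id}$ and $g|_{\mathrm{id}}=g$) recovers $(\sigma,g)\mapsto(\sigma\sigma_0,g)$.

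The one genuinely non-formal step is the verification that $\beta\mapsto\Gamma(\beta)$ is functorial, i.e.\ $\Gamma(\beta'\circ\beta)=\Gamma(\beta)\circ\Gamma(\beta')$. Unwinding the unique factorizations yields $\phi_{\beta'\circ\beta}=\phi_{\beta'}\circ(\sigma_{\beta'}\cdot\phi_\beta)$ and $\sigma_{\beta'\circ\beta}=(\sigma_{\beta'}|_{\phi_\beta})\,\sigma_\beta$, after which both the object- and the morphism-component of the desired identity follow by repeatedly combining the naturality relation $g|_{\phi\circ\psi}=(g|_\phi)|_\psi$, the crossed-homomorphism rule, and the factorization of the action on $\Delta$-morphisms through $\pi$ — the catch being that the $\Delta G$- and $\Delta N$-level data must be tracked simultaneously throughout. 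Once this is checked, $\Gamma_*\colon(\Delta N)^{\mathrm{op}}\to\Gpd$ is the required functor: $N_n$ acts freely on each $\Gamma_n$ and $\B\Gamma_n\simeq K(P_n,1)$.
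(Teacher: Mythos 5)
Your construction is, up to a mirror convention, the same as the paper's: the paper takes $\Gamma_n=G_n\sslash P_n$ with object set $G_n/P_n=N_n$, morphisms $[\sigma,f]\colon\sigma\to\sigma f^{-1}$ acting by \emph{right} translation through $\pi$, and a \emph{left} $N_n$-translation $\tau\cdot[\sigma,f]=[\tau\sigma,f]$, whereas you use left $G_n$-translation and right $N_n$-translation; the freeness and $K(P_n,1)$ arguments (connected groupoid with vertex group $P_n$, action free on objects) are identical, and your identification of the quotient with $\B G_n$ matches the paper's Lemma on $N_n\backslash\B\Gamma_n\simeq K(G_n,1)$. Where you genuinely diverge is in how functoriality over $(\Delta N)^{\mathrm{op}}$ is established. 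The paper works with generators: it writes explicit formulas $s_i[\sigma,f]=[s_i(\sigma),\,s_{\sigma^{-1}(i)}(f^{-1})^{-1}]$ and $d_i[\sigma,f]=[d_i(\sigma),\,d_{\sigma^{-1}(i)}(f^{-1})^{-1}]$, checks functoriality of each via the crossed-homomorphism axiom, verifies the five families of simplicial identities by a case analysis on $\sigma^{-1}(i)\lessgtr\sigma^{-1}(j)$ (relegated to an appendix), and then checks the face/degeneracy compatibilities of Lemma~4.2 of Fiedorowicz--Loday for the $N_n$-action. You instead invoke the unique-factorization presentation of $\Delta G$ and the exchange rule $g^{*}\circ\phi=(g\cdot\phi)\circ(g|_{\phi})^{*}$ to define $\Gamma(\beta)$ for an arbitrary morphism $\beta$ of $\Delta N$ in one stroke, reducing everything to the cocycle identities for $g|_{\phi}$ and the observation that $g\cdot\phi$ factors through $\pi$ because $P_*$ acts trivially. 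This buys a uniform, coordinate-free verification that subsumes the appendix's case-by-case computations, at the cost of having to set up (and trust) the coherence of the restriction formalism; conversely, the paper's generator-level formulas are exactly what it needs later when it upgrades $\Gamma_*$ to an operad, so the explicit $s_i$, $d_i$ are not wasted effort there. Your one admitted gap --- the composite check $\Gamma(\beta'\circ\beta)=\Gamma(\beta)\circ\Gamma(\beta')$ --- is sketched at about the same level of detail as the paper's own ``routine check'' remarks, and the relations you list ($\phi_{\beta'\circ\beta}=\phi_{\beta'}\circ(\sigma_{\beta'}\cdot\phi_\beta)$, $\sigma_{\beta'\circ\beta}=(\sigma_{\beta'}|_{\phi_\beta})\sigma_\beta$, the crossed rule, and $\pi_m(g|_\phi)=\pi_n(g)|_\phi$) do suffice to close it, so I see no error.
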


Next, we focus on the class of \emph{symmetric crossed simplicial groups}, namely those for which the structural quotient \( N_* \) is the symmetric crossed simplicial group \( S_* \). For such crossed simplicial groups, we introduce two structural enchancements: a \textit{monoidal crossed simplicial group} and an \textit{operadic crossed simplicial group} (see Definition~\ref{def:monoidal_csg} and Definition~\ref{def:operadic_csg}). These structures permit the construction of an operad built on the groupoids \(\Gamma_*\) from the theorem above.
Namely, we prove the following theorem.

\begin{theorem*}[Theorem~\ref{thm:set_operad}, Theorem~\ref{thm:gpd_operad}]
Let \( G_* \) be a monoidal crossed simplicial group. Then the sequence of sets \( \{G_n\}_{n \ge 0} \) admits an operad structure.
Moreover, if \( G_* \) is an operadic crossed simplicial group, there is a symmetric operad structure on the sequence of groupoids \(\{\Gamma_n\}_{n \ge 0}\).
\end{theorem*}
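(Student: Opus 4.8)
The plan is to realize both operads as \emph{endomorphism operads of the monoid object $[0]$} in a monoidal category manufactured from $G_*$, and then to lift the construction from sets to groupoids for the second assertion. For the first statement I would begin by unwinding Definition~\ref{def:monoidal_csg} into the assertion that a monoidal structure on $G_*$ amounts to a strict monoidal structure $\oplus$ on the category $\Delta G$ which restricts on $\Delta\subset\Delta G$ to the ordinal sum $[m]\oplus[n]=[m+n+1]$ and for which $[0]$ is a monoid object (multiplication the unique surjection $[1]\to[0]$ of $\Delta$, unit $\emptyset\to[0]$); concretely the new data over $\Delta$ is a family of block-sum group homomorphisms $G_m\times G_n\to G_{m+n+1}$ compatible with faces, degeneracies and with $\oplus$ on $\Delta$. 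Granting this reformulation, I would invoke the standard fact that for any monoid object $A$ in a monoidal category $\mathcal{M}$ the sequence $n\mapsto \mathcal{M}(A^{\oplus n},A)$ carries a canonical non-symmetric operad structure — composition is $\oplus$ of the inner operations followed by the outer one, the unit is $\id_A$, and the operad axioms are forced by the interchange law together with the associator and unitors of $\mathcal{M}$. Applying this with $A=[0]$ in $(\Delta G,\oplus)$, and using the unique factorization $\Hom_{\Delta G}([m],[n])\cong \Hom_\Delta([m],[n])\times G_m$ from the excerpt together with $\Hom_\Delta([n-1],[0])=\{\ast\}$, one gets $\mathcal{M}([0]^{\oplus n},[0])\cong G_{n-1}$; thus $\{G_n\}_{n\ge 0}$, reindexed as $\mathcal{O}(n)=G_{n-1}$ (with $\mathcal{O}(0)$ the one-point set $\Hom(\emptyset,[0])$), is an operad. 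It is worth recording the composition explicitly: it is the ``block'' operation $(g;h_1,\dots,h_k)\mapsto (h_1\oplus\dots\oplus h_k)$ post-composed with a simplicial rewriting of $g$, i.e. exactly the crossed-homomorphism identities of $G_*$ governing the factorization in $\Delta G$.

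For the second statement I would run the same argument one categorical level higher. By Theorem~\ref{thm:groupoid_construction}, for a symmetric crossed simplicial group the groupoids $\Gamma_n$ carry a free action of $N_n=S_n=\Sigma_{n+1}$ and satisfy $\B\Gamma_n\simeq K(P_n,1)$; I would regard $\Gamma_n$ as the hom-groupoid $\Hom_{\Delta\mathbf{N}}([n],[0])$ of a $\Gpd$-enriched refinement $\Delta\mathbf{N}$ of $\Delta N$ (its object set $N_n$ appears as $\Hom_\Delta([n],[0])\times N_n$ and its automorphism groups are copies of $P_n$). The operadic crossed simplicial group structure of Definition~\ref{def:operadic_csg} is precisely what promotes $\Delta\mathbf{N}$ to a $\Gpd$-enriched monoidal category: the block-sum homomorphisms $G_m\times G_n\to G_{m+n+1}$ and the monoidality of the structural projection $\pi$ induce composition functors $\Gamma_m\times\Gamma_n\to\Gamma_{m+n+1}$ on objects and on morphisms, with $[0]$ still a monoid object and with the associativity and unit isomorphisms supplied by the monoidal coherence. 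Repeating the endomorphism-operad construction internally to $\Gpd$ produces an operad in groupoids with $\mathcal{O}^\Gamma(n)=\Hom_{\Delta\mathbf{N}}([0]^{\oplus n},[0])\cong\Gamma_{n-1}$. Finally I would upgrade this to a \emph{symmetric} operad: the symmetric-group action needed on $\mathcal{O}^\Gamma(n)=\Gamma_{n-1}$ is exactly the free $S_{n-1}=\Sigma_n$-action of Theorem~\ref{thm:groupoid_construction}, and the only remaining verification — equivariance of $\gamma$ with respect to these actions and the usual block maps $\Sigma_k\to\Sigma_{j_1+\dots+j_k}$ and $\Sigma_{j_1}\times\dots\times\Sigma_{j_k}\to\Sigma_{j_1+\dots+j_k}$ — is the compatibility between $\oplus$ and the block-transposition data axiomatized in Definition~\ref{def:operadic_csg}.

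The hard part will be this last point. The ordinal sum on $\Delta$ is \emph{not} symmetric monoidal, so the symmetry of the operad $\{\Gamma_n\}$ cannot be extracted from $\oplus$ alone: it has to be produced entirely from the structural projection $\pi\colon G_*\twoheadrightarrow S_*$ and the operadic-CSG coherences. This is also the reason a mere monoidal crossed simplicial group yields only a non-symmetric operad on $\{G_n\}$ — there is in general no $\Sigma_n$-action on the set $G_{n-1}$, since the surjection $G_{n-1}\twoheadrightarrow S_{n-1}$ need not split — and why one must pass to the free resolution $\Gamma_n$, on which $S_{n-1}$ does act freely. Concretely, checking equivariance amounts to tracking, through the definition of $\Gamma_*$ and of the composition functors, that permuting blocks before versus after applying $\oplus$ differs by a canonical natural isomorphism, with all such isomorphisms cohering; the bookkeeping is routine once the operadic-CSG axioms are invoked, but organizing it so the coherences are applied in the right order is where the care lies. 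A minor, purely formal obstacle is the treatment of arity $0$ and the monoidal unit $[-1]$ — equivalently, whether $G_{-1}$ is taken as part of the data — which one disposes of by adjoining a terminal object (respectively the terminal groupoid) in arity $0$.
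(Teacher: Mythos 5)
Your strategy --- realize the $\Set$-operad as the endomorphism operad of $[0]$ in a strict monoidal category $(\Delta G,\oplus)$ extending ordinal sum on $\Delta$, and the groupoid operad as the same construction in a $\Gpd$-enriched quotient of $\Delta G$ by $P_*$ --- is a genuinely different and more conceptual packaging than the paper's, which instead writes down the explicit formula $\alpha\circ_i\beta=(1_i\boxplus\beta\boxplus 1_{n-i})\cdot s_i^m(\alpha)$ and verifies the five shifted-operad axioms by direct computation with the crossed simplicial identities (Appendix A), then descends to $\Gamma_*$ via the single identity $(\alpha\circ_i\beta)\cdot(\alpha'\circ_{\alpha^{-1}(i)}\beta')=\alpha\alpha'\circ_i\beta\beta'$, which is what the operadicity axiom buys. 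Your picture correctly identifies the origin of that formula (composition in $\Delta G$ of $\phi\cdot\alpha$ with $\id^{\oplus i}\oplus(\psi\cdot\beta)\oplus\id^{\oplus(n-i)}$, rewritten in the normal form ``automorphism followed by $\Delta$-morphism''), and your closing observations --- that ordinal sum is not symmetric, so the $\Sigma$-structure must come entirely from the $N_*=S_*$-action on $\Gamma_*$ and cannot exist on $G_*$ itself absent a splitting of $\pi$ --- are accurate and match the paper's division of labor between Theorem~\ref{thm:set_operad} (non-symmetric) and the $G_*$-like/$\Gamma_*$ level.

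The gap is that your pivotal reformulation is asserted where it needs to be proved, and proving it is essentially the whole theorem. Definition~\ref{def:monoidal_p_csg} demands only that $\boxplus=(s_R)^{\circ(m+1)}\times(s_L)^{\circ(n+1)}$ followed by multiplication be a group homomorphism; for $\oplus$ to be a \emph{bifunctor} on $\Delta G$ you additionally need the exchange relations $(\alpha\boxplus\beta)^{(\phi\oplus\psi)}=\alpha^{\phi}\boxplus\beta^{\psi}$ for all $\phi,\psi\in\Delta$, i.e.\ compatibility of $\boxplus$ with every face and degeneracy in the crossed sense. These must be extracted from ambi-contractibility (the extra-degeneracy identities for $s_L,s_R$) together with the crossed homomorphism rule $d_i(xy)=d_i(x)d_{x^{-1}(i)}(y)$, and that extraction is precisely the content of items (2)--(5) of the paper's proof of Theorem~\ref{thm:set_operad}; ``granting this reformulation'' therefore grants the theorem. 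The same deferral recurs at the groupoid level: that $\oplus$ descends to the action groupoids $G_n\sslash P_n$ is exactly identity~(3.1), whose derivation from Definition~\ref{def:operadic_csg} is the one nontrivial step in the paper's proof of Theorem~\ref{thm:gpd_operad}, and which your sketch does not reproduce. (Your treatment of equivariance as ``routine bookkeeping'' is no worse than the paper's own, which also states the $G_*$-like structure without full verification, but it is the part of the statement labelled ``symmetric'' and should not be waved through.) To turn your proposal into a proof you would need to establish the monoidal-category claim as a lemma, at which point you would be doing the paper's appendix computation under a different name.
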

For \( G_* = B_* \) (Artin braid groups), the resulting operad is equivalent to the \( E_2 \)-operad, while for \( G_* = S_* \) (symmetric groups), it is equivalent to the \( E_\infty \)-operad; see Section~\ref{sec:examples} and Remark~\ref{rem:E_ooE_2}.

To highlight the motivation behind these theorems, we now list several of their immediate consequences. Let \( G_* \) be an operadic crossed simplicial group, and let \( T_G \) denote the monad associated to the operad \( \B\Gamma_* \) in \( \Top \). Recall that for a topological monoid there is a classifying space construction given by the composition of functors
\[\B \colon \TopMon \xrightarrow{N} \sTop \xrightarrow{|-|} \Top.\]

\begin{corollary*}
The group completion \( T_G(*)^{\mathrm{grp}} \) satisfies a weak equivalence
\[
T_G(*)^{\mathrm{grp}} \xrightarrow{\simeq} 
\Omega \B \left(\bigsqcup_{n \ge 0} \B G_n\right),
\]
where the symbol $\B$ on the right-hand side stands for the classifying space construction of a topological monoid.
\end{corollary*}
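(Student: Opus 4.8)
The plan is to identify $T_G(*)$, together with its natural monoid structure, with the topological monoid $\bigsqcup_{n\ge 0}\B G_n$, and then to apply the group-completion functor $M\mapsto\Omega\B M$. First I would unwind the free algebra: since $T_G$ is the monad on $\Top$ associated to the operad $\OO:=\B\Gamma_*$, its value on a one-point space is the free $\OO$-algebra on a point,
\[
T_G(*)\;=\;\bigsqcup_{n\ge 0}\OO(n)\times_{\Sigma_n}(*)^{\times n}\;=\;\bigsqcup_{n\ge 0}\B\Gamma_n/\Sigma_n,
\]
with no basepoint identifications, the input being a single point. The operadic composition maps $\gamma(-;\id,\dots,\id)\colon\OO(j)\times\OO(k)\to\OO(j+k)$ turn this disjoint union into a unital topological monoid; the multiplication is strictly associative because, by Theorem~\ref{thm:gpd_operad}, the operad on $\Gamma_*$ is constructed from the strictly associative monoidal structure $\boxplus$ of the monoidal crossed simplicial group $G_*$. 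It is moreover homotopy-commutative, since its arity-two space is of the form $\B\Gamma_j\simeq K(P_j,1)$ and hence connected.

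Next I would pin down the homotopy type of each summand. By Theorem~\ref{thm:groupoid_construction} the structural quotient group acts freely on the groupoid $\Gamma_n$, so $\Sigma_n$ acts freely and cellularly on $\B\Gamma_n$; therefore the strict orbit space agrees with the homotopy orbit space, $\B\Gamma_n/\Sigma_n\simeq\B\Gamma_n\sslash\Sigma_n=\B(\Gamma_n\sslash\Sigma_n)$. Since $\B\Gamma_n\simeq K(P_n,1)$ and the free action is exactly the one extracted from the extension $P_*\hookrightarrow G_*\twoheadrightarrow N_*$ in Theorem~\ref{thm:groupoid_construction}, the action groupoid $\Gamma_n\sslash\Sigma_n$ is a connected groupoid with fundamental group $G_n$, hence equivalent to the one-object groupoid on $G_n$, and $\B(\Gamma_n\sslash\Sigma_n)\simeq\B G_n$. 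Running this over all $n$ produces a natural chain of weak equivalences $T_G(*)\simeq\bigsqcup_{n\ge 0}\B G_n$, and the remaining point is to check that it is a chain of equivalences of \emph{topological monoids}: that $\gamma(-;\id,\dots,\id)$ corresponds, after passage to classifying spaces, to the map $\B G_m\times\B G_n\to\B G_{m+n}$ induced by $\boxplus$. This holds because the operad structure on $\Gamma_*$ was defined out of $\boxplus$ in Theorem~\ref{thm:gpd_operad}.

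I expect this last point --- the monoidality of the comparison, that is, tracing the construction in Theorem~\ref{thm:gpd_operad} far enough to confirm that operadic composition with identity inputs recovers $\boxplus$ on classifying spaces --- to be the main obstacle; the group-completion step is then formal. To conclude, I would note that $M\mapsto\Omega\B M$ preserves weak equivalences of topological monoids and, for the homotopy-commutative monoid $T_G(*)$, computes the group completion (by the McDuff--Segal group-completion theorem, or directly from the chosen model of $(-)^{\mathrm{grp}}$). Applying it to the comparison above yields
\[
T_G(*)^{\mathrm{grp}}\;=\;\Omega\B\bigl(T_G(*)\bigr)\;\xrightarrow{\ \simeq\ }\;\Omega\B\Bigl(\bigsqcup_{n\ge 0}\B G_n\Bigr),
\]
as claimed.
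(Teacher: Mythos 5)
Your argument is correct and is in substance the same as the paper's, which compresses everything into the one line $T_G(*)^{\mathrm{grp}} = \Omega\B(T_G(*)) = \Omega\B\bigl(\bigsqcup_n EG_n\times_{G_n}*\bigr) = \Omega\B\bigl(\bigsqcup_n \B G_n\bigr)$ after the remark ``after checking that necessary structures align.'' The only real difference is presentational: the paper takes the $G_\infty$-model $\OO(n)=EG_n$ and quotients by all of $G_n$ (the monad of a $G_*$-like operad, following Fiedorowicz, uses $\OO(n)\times_{G_n}X^n$ rather than $\OO(n)\times_{\Sigma_n}X^n$), whereas you take $\OO(n)=\B\Gamma_n$ and quotient by the structural quotient group; these coincide because the $G_n$-action on $X^n$ factors through $N_n$, and your identification $N_n\backslash\B\Gamma_n\simeq \B G_n$ is exactly Lemma~\ref{quotient_G}. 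You also make explicit the two points the paper leaves implicit --- that the comparison is one of topological monoids (operadic composition versus $\B\boxplus$) and that homotopy commutativity, from connectivity of the arity-two space, is what licenses the group-completion step --- which is a genuine improvement in rigor rather than a different proof.
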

This corollary may be viewed as a manifestation of the Cohen--Segal--May and Barratt--Priddy--Quillen theorems. In the case $G = B_*$, it asserts that
\(
\Omega \B\left(\bigsqcup_{n \ge 0} \B B_n\right)
\)
is the free group-like $E_2$-algebra generated by a point, and thus it is weakly equivalent to $\Omega^2S^2$. Likewise, when $G = S_*$, the corollary identifies
\(
\Omega \B\left(\bigsqcup_{n \ge 0} \B S_n\right)
\)
as the free group-like $E_\infty$-algebra generated by a point and thus it is weakly equivalent to $\Omega^\infty \Sigma^\infty S^0$.

For any topological monoid \( M \), one can define its \( G_* \)-bar construction \[ \B^{G_*} M \colon \Delta G \to \Top,\]
generalizing the symmetric and braided bar constructions.
We obtain the following generalization of \cite[Proposition 1.4]{Fied-preprint}.
\begin{corollary*}
There is a weak equivalence
\[
\hocolim_{\Delta G} \B^{G_*} M \simeq \B(T_G, C_1, M),
\]
where the symbol $\B$ on the right-hand side stands for the monadic bar construction.
\end{corollary*}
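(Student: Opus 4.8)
The plan is to transport, into the structured setting of an operadic crossed simplicial group, the computation that Fiedorowicz carries out for the symmetric and braided bar constructions in \cite[Proposition~1.4]{Fied-preprint}; the new ingredient is the family of groupoids \(\Gamma_*\) of Theorem~\ref{thm:groupoid_construction} together with the symmetric operad structure on \(\B\Gamma_*\) from Theorem~\ref{thm:gpd_operad}. Since \(G_*\) is operadic its structural quotient is \(N_* = S_*\), so we have \(P_* \hookrightarrow G_* \overset{\pi}{\twoheadrightarrow} S_*\) and an induced functor \(\pi \colon \Delta G \to \Delta S\). I would begin by recording two formal properties of \(\B^{G_*} M\): its restriction along \(\Delta \hookrightarrow \Delta G\) is the ordinary nerve \([n] \mapsto M^{\times n}\) of the monoid \(M\), with faces and degeneracies built from the multiplication and unit; and the self-map of \(\B^{G_*}_n M = M^{\times n}\) induced by \(g \in G_n\) permutes the factors according to \(\pi(g) \in S_n\), twisted by the crossed structure of \(G_*\). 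In particular the diagram is homotopically constant along the directions coming from \(P_* = \ker\pi\), with value \(M^{\times n}\).

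Next I would invoke the standard comparison of homotopy colimits along \(\pi\),
\[
\hocolim_{\Delta G} \B^{G_*} M \;\simeq\; \hocolim_{[n]\in\Delta S}\, \big(\mathbb{L}\pi_!\, \B^{G_*}M\big)([n]),
\]
and compute the homotopy left Kan extension fibrewise. The comma category controlling \(\big(\mathbb{L}\pi_!\,\B^{G_*}M\big)([n])\) has classifying space a model for \(\B P_n\), which by Theorem~\ref{thm:groupoid_construction} is realised by the groupoid \(\Gamma_n\) with \(\B\Gamma_n \simeq K(P_n,1)\) carrying its free \(S_n\)-action; combined with the homotopical constancy along the kernel directions this presents \(\big(\mathbb{L}\pi_!\,\B^{G_*}M\big)([n])\) as built from \(\B\Gamma_n\) and \(M^{\times n}\) with the diagonal \(S_n\)-action. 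The point of the operadic crossed simplicial group axioms (Definition~\ref{def:operadic_csg}) is precisely that these values assemble, over \(\Delta S\), into the transport diagram of the operad \(\B\Gamma_*\) (Theorem~\ref{thm:gpd_operad}) acting on the nerve of \(M\); it may be technically cleaner to interpolate through the category of operators of \(\B\Gamma_*\) rather than through \(\Delta S\) directly, but the homotopical content is the same.

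It then remains to recognise
\[
\hocolim_{[n]\in\Delta S}\, \big(\mathbb{L}\pi_!\, \B^{G_*}M\big)([n])
\]
as the geometric realisation of the simplicial space \([q] \mapsto T_G\,C_1^{\,q}\,M\) computing the two-sided monadic bar construction \(\B(T_G, C_1, M)\): here \(T_G\) is by definition the monad of the operad \(\B\Gamma_*\), the monad \(C_1\) — through which the topological monoid \(M\) is an algebra, exactly as in the symmetric and braided cases — comes with a monad map \(C_1 \to T_G\) supplied by the unital and binary operations of \(\B\Gamma_*\) (which are built into an operadic crossed simplicial group via the set-level operad of Theorem~\ref{thm:set_operad}), and the \(\Delta S\)-indexed transport diagram above is exactly the one whose realisation is this bar construction. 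Specialising \(G_* = S_*\) recovers \(\B(T_\infty, T_1, M)\) and \(G_* = B_*\) recovers \(\B(T_2, T_1, M)\), so this genuinely generalises \cite[Proposition~1.4]{Fied-preprint}.

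I expect the main obstacle to lie in the middle step: pinning down \(\mathbb{L}\pi_!\, \B^{G_*}M\) on the nose as a functor on \(\Delta S\) requires keeping simultaneous track of the \(S_n\)-actions on the \(\Gamma_n\), the partial composition maps of the operad \(\B\Gamma_*\), and the simplicial structure of the nerve of \(M\), together with the variance conventions for homotopy colimits over crossed simplicial categories, and then matching this ``\(\Delta S\)-indexed'' bookkeeping against the ``category of operators'' bookkeeping underlying \(\B(T_G, C_1, M)\). Once the two descriptions are reconciled — which is where the operadic axioms of Definition~\ref{def:operadic_csg} do their work — the Fubini comparison for homotopy colimits and the identification of a geometric realisation with a monadic bar construction are both formal.
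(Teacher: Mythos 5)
The paper gives no proof of this corollary at all: it simply asserts that the statement ``can be justified by arguments analogous to those in'' Fiedorowicz's preprint, after defining $\B^{G_*}M \colon \Delta G \to \Delta S \to \Top$, $[n]\mapsto M^{n+1}$. So the comparison here is necessarily against the intended Fiedorowicz-style argument rather than a written one. Your route differs from that intended argument in one structural respect: you push forward along $\pi\colon \Delta G \to \Delta S$ and try to compute $\mathbb{L}\pi_!$ fibrewise using $\B\Gamma_n \simeq K(P_n,1)$ with its $S_n$-action, whereas the mechanism the paper sets up (Corollary~\ref{thm_corollary}(2)) is designed to work directly over $\Delta G$: the $\Delta G$-space $n\mapsto \B\Gamma_n\langle 1\rangle \simeq EG_n$ with free $G_n$-action lets one identify $\hocolim_{\Delta G}\B^{G_*}M$ with the realization of $[n]\mapsto EG_n\times_{G_n}M^{n+1}$ in one step. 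The two descriptions agree levelwise (since $P_n$ acts trivially on $M^{n+1}$, one has $EG_n\times_{G_n}M^{n+1}\cong \B\Gamma_n\times_{S_n}M^{n+1}$), but your version forces you to re-establish functoriality over $\Delta S$ and a cofinality statement for the comma categories of $\pi$ (that $\B(\pi\downarrow[n])\simeq \B P_n$ compatibly with all the simplicial operators), which you assert rather than prove. Note also the indexing: the degree-$n$ space is $M^{n+1}$, not $M^{\times n}$, and $S_n=\Sigma_{n+1}$ acts on all $n+1$ factors, so the restriction to $\Delta$ is not the ordinary nerve of $M$.

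The more serious gap is the final step, which you declare ``formal.'' The homotopy colimit you have produced is the realization of a simplicial space whose simplicial direction is $n$, the number of monoid factors, while $\B(T_G,C_1,M)$ is the realization of $[q]\mapsto T_G C_1^{q}M$, whose simplicial direction $q$ counts iterations of the monad $C_1$. These are not related by a Fubini interchange; identifying them is precisely the content of Fiedorowicz's Proposition~1.4 and requires an explicit comparison, typically built from the monad map $C_1\to T_G$, the equivalence of $C_1$ with the free monoid monad (so that $C_1^{q}M\simeq M$ and the extra degeneracies can be organized), and a levelwise weak equivalence of the two simplicial spaces after this reindexing. Your sketch correctly locates where the operadic axioms of Definition~\ref{def:operadic_csg} enter (they are what make $\B\Gamma_*$ an operad acting compatibly with the $\Delta S$-structure), but as written the proposal stops exactly at the point where the actual argument begins. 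To complete it you would need either to import Fiedorowicz's comparison verbatim and check that it only uses the $G_*$-like operad axioms together with freeness and contractibility of the $\B\Gamma_n\langle 1\rangle$, or to construct the zig-zag of weak equivalences between the two realizations directly.
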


% \subsection*{Organization of paper.} In Section~\ref{sec:functorial_classifying_space} for a crossed simplicial group we provide the constructions and proofs for Theorem~\ref{thm:groupoid_construction}. In Section~\ref{sec:operadic_crossed_simplicial_groups} we introduce aforementioned enhancements of the structure of a crossed simplicial group and provide the necessary examples. In Section~\ref{sec:constructing_operads} we remind the basics of the operad theory and construct the operads in $\Set$ and $\Gpd$, proving the theorems Theorems~\ref{thm:set_operad, thm:gpd_operad}. In Appendix~\ref{ap:routine_verifications} we give some of the tedius computational proofs to not interrupt the flow. Appendix~\ref{ap:structure_map_Kan} is a little external to the paper itself. It contains a sufficient condition for the structure map to be a Kan fibration, that is Theorem~\ref{thm:structure_map_Kan}). 

\subsection*{Organization of the paper.}
In Section~\ref{sec:functorial_classifying_space}, we develop the constructions and proofs underlying Theorem~\ref{thm:groupoid_construction} for a crossed simplicial group. Section~\ref{sec:operadic_crossed_simplicial_groups} introduces the aforementioned enhancements of the structure of a crossed simplicial group and provides the relevant examples. In Section~\ref{sec:constructing_operads}, we review the basic notions of operad theory and construct operads in the categories $\Set$ and $\Gpd$, proving Theorem~\ref{thm:set_operad} and Theorem~\ref{thm:gpd_operad}. Appendix~\ref{ap:routine_verifications} contains somewhat tedious computational verifications, put there so as not to interrupt the flow of the exposition. Finally, Appendix~\ref{ap:structure_map_Kan} is somewhat auxiliary to the main narrative of the paper; it establishes a sufficient condition for the structure map to be a Kan fibration, stated as Theorem~\ref{thm:structure_map_Kan}.

% For such a group $G_*$ we construct 
% Furthermore, in the present paper we show how using some additional structure on a crossed simplicial group (which is present in all of the cases of interest) one can repeat Fiedorowicz's reasoning in much larger generality. That is, we build a structure of an operad on the groupoids $\Gamma_n$'s and then transfer it into $\Top$ via the $B(-)$ functor. 

\subsection*{Acknowledgments} The author would like to thank Boris Shoikhet, Viktor Lavrukhin, and Vasily Ionin for helpful discussions. Additionally, the author wishes to express sincere gratitude to Matthew Magin for careful reading of a draft of this paper, and to Andrey Ryabichev
for drawing the beautiful pictures for this paper.
% ...

% Denote as $\Sigma_n$ the symmetric group on $n$ points.  In this paper, we often regard a permutation $\sigma \in S_n$ as the sequence of numbers $\sigma(0), \sigma(1), \ldots, \sigma(n)$. As described in ~\cite{f0e21457-c8c3-31f9-bd48-762cf5f699ae}, the groups $S_*$ form a crossed simplicial group. 

\section{Functorial classifying space}
\label{sec:functorial_classifying_space}
\subsection{Preliminaries}

For every crossed simplicial group $G_*$ we denote its associated category by $\Delta G$, as in~~\cite{f0e21457-c8c3-31f9-bd48-762cf5f699ae}.

\begin{definition}
    For a crossed simplicial group $G_*$ and a category $\cc$ a \textit{$G_*$-object} in $\cc$ is a functor $(\Delta G)^\mathrm{op} \to \cc$. 
\end{definition}
The following interpretation of this notion is useful for us.
\begin{lemma}[Lemma~4.2 of~{\cite{f0e21457-c8c3-31f9-bd48-762cf5f699ae}}]
\label{action_lemma_fied_lod}
The data of a $G_*$-object in $\cc$ is equivalent to that of a simplicial object $X$ in $\cc$ together with given left actions $G_n \curvearrowright X_n$ for each $n$ satisfying the following identities{\rm:}
\begin{enumerate}        
    \item \textbf{Face identities}{\rm :} $d_i(gx) = d_i(g)d_{g^{-1}(i)}(x)$;
    \item \textbf{Degeneracy identities}{\rm :} $s_i(gx) = s_i(g)s_{g^{-1}(i)}(x)$.
\end{enumerate}
\end{lemma}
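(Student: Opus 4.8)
The plan is to regard this as an unwinding of the two equivalent descriptions of a crossed simplicial group, using the explicit combinatorics of $\Delta G$ recorded in op.\ cit. First I would recall the relevant structure: $\Delta G$ has the same objects $[n]$ as $\Delta$, contains $\Delta$ as a wide subcategory, the group $\mathrm{Aut}_{\Delta G}([n])$ is identified with $G_n$ (with the variance convention of op.\ cit.), and every morphism $[m]\to[n]$ of $\Delta G$ factors \emph{uniquely} as $\phi\circ\bar g$ with $\phi\colon[m]\to[n]$ in $\Delta$ and $\bar g$ the automorphism of $[m]$ determined by some $g\in G_m$. The crucial input is the ``straightening'' rule describing how an automorphism $\bar g$, for $g\in G_n$, interacts with the generating cofaces $\delta^i$ and codegeneracies $\sigma^i$: op.\ cit.\ shows that in $\Delta G$ one has relations of the form $\delta^i\circ\bar g=\overline{d_i(g)}\circ\delta^{g^{-1}(i)}$ and $\sigma^i\circ\bar g=\overline{s_i(g)}\circ\sigma^{g^{-1}(i)}$, where the reindexing is governed by the action of $g$ on vertices and the group-valued maps appearing are exactly the $d_i,s_i$ of $G_*$. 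Equivalently, $\Delta G$ admits a presentation whose generators are those of $\Delta$ together with the elements of each $G_n$, and whose relations are the simplicial relations of $\Delta$, the multiplication tables of the $G_n$, and these mixed straightening relations.

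Granting this, the two directions are routine. For ``functor $\Rightarrow$ data'', given $X\colon(\Delta G)^{\mathrm{op}}\to\cc$ I would restrict along $\Delta^{\mathrm{op}}\hookrightarrow(\Delta G)^{\mathrm{op}}$ to get a simplicial object $X_*$, and restrict along the automorphism subgroups to get, for each $n$, a left action of $G_n$ on $X_n$ (the contravariance being absorbed into the identification $\mathrm{Aut}_{\Delta G}([n])\cong G_n$, or else compensated by an inversion). Applying $X$ to the mixed straightening relations and invoking functoriality then yields precisely the face identities $d_i(gx)=d_i(g)\,d_{g^{-1}(i)}(x)$ and degeneracy identities $s_i(gx)=s_i(g)\,s_{g^{-1}(i)}(x)$, which one recognizes as the natural ``$G_*$-module'' extensions of the crossed homomorphism conditions $d_i(g_1g_2)=d_i(g_1)d_{g_1^{-1}(i)}(g_2)$ recalled above. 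Conversely, for ``data $\Rightarrow$ functor'', given $X_*$ with compatible $G_n$-actions I would set $[n]\mapsto X_n$ on objects and, for a morphism $f\colon[m]\to[n]$ with unique factorization $f=\phi\circ\bar g$, define $X(f):=(g\cdot-)\circ X(\phi)$ (with the same variance caveat). By the presentation above, well-definedness and functoriality amount to checking that this assignment respects each family of generating relations: the $\Delta$-relations hold because $X_*$ is simplicial, the group relations hold because each $G_n$ acts, and the mixed straightening relations hold because they are exactly the assumed face and degeneracy identities. One then observes that the two constructions are mutually inverse, which completes the equivalence.

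The step I expect to be the main obstacle is purely bookkeeping: pinning down the exact shape of the straightening relations in $\Delta G$ — in particular confirming that the reindexing produces $g^{-1}(i)$ and not $g(i)$, and that the group element occurring is literally $d_i(g)$ (resp.\ $s_i(g)$) — while keeping a single, consistent convention for whether $\mathrm{Aut}_{\Delta G}([n])$ is $G_n$ or $G_n^{\mathrm{op}}$ and whether the resulting action is a left or a right action. I would dispose of this by fixing conventions at the outset to agree with op.\ cit.\ and carrying out the verification only on the generating morphisms $\delta^i$, $\sigma^i$ and the automorphisms, since the presentation guarantees this finite check suffices. The one external fact I would simply cite is the completeness of that presentation of $\Delta G$, i.e.\ that no further relations are needed; this is part of the structural analysis of crossed simplicial groups in op.\ cit.
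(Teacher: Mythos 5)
The paper does not prove this statement: it is imported verbatim as Lemma~4.2 of Fiedorowicz--Loday, with only the citation as justification. Your reconstruction --- restricting a functor on $(\Delta G)^{\mathrm{op}}$ along $\Delta^{\mathrm{op}}$ and along the automorphism groups, and conversely extending via the unique factorization $f=\phi\circ\bar g$ and the generators-and-relations presentation of $\Delta G$, with the mixed straightening relations translating exactly into the face and degeneracy identities --- is precisely the argument in the cited source, and the bookkeeping issues you flag (variance, $g^{-1}(i)$ versus $g(i)$, and the degree-matching in the straightening relations) are genuinely the only delicate points and are resolved by fixing the conventions of op.\ cit.\ as you propose.
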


We remind the reader the structure theorem of crossed-simplicial groups.
\begin{theorem}[Theorem~3.6 of~{\cite{f0e21457-c8c3-31f9-bd48-762cf5f699ae}}]\label{structure_thm_fied_lod}
For any crossed simplicial group $G_*$ there is a short exact sequence of crossed simplicial groups
$$1 \to P_* \to G_* \xrightarrow{\pi} N_* \to 1,$$
where $P_*$ is a simplicial group and $N_*$ is one of the following {\rm 7} crossed simplicial groups{\rm:}
$$\{1\}, \; C_*, \; \{\ZZ/2\}, \; S_*, \; D_*, \; {\ZZ/2\times S_*}, \; H_*.$$
\end{theorem}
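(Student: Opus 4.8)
The plan is to reproduce the strategy of Fiedorowicz and Loday. The first move is to strip off from $G_*$ a canonical ``simplicial part'' and then recognize what remains. Each $G_n$ acts on the vertex set $[n]=\{0,\dots,n\}$ of the standard $n$-simplex, and the factorization property of $\Delta G$ — every morphism is uniquely an automorphism coming from $G_*$ followed by a morphism of $\Delta$ — shows that passing to the induced map on vertices is functorial, hence defines a morphism of crossed simplicial groups $G_*\to S_*$ to the symmetric crossed simplicial group. I would take $P_n\subseteq G_n$ to be (up to the bookkeeping noted below) the subgroup of elements acting trivially on $[n]$. A short argument with the face and degeneracy identities shows that $P_*$ is a normal crossed simplicial subgroup; and because its elements act trivially on vertices, the crossed homomorphism relations $d_i(g_1g_2)=d_i(g_1)d_{g_1^{-1}(i)}(g_2)$ and their degeneracy analogues collapse to ordinary homomorphism relations on $P_*$, so that $P_*$ is a genuine simplicial group. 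Setting $N_*=G_*/P_*$ produces the short exact sequence $1\to P_*\to G_*\xrightarrow{\pi}N_*\to 1$, and, $P_*$ being chosen maximal, the quotient $N_*$ has no nontrivial normal simplicial subgroup.

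It then remains to classify the crossed simplicial groups $N_*$ with no nontrivial normal simplicial subgroup — each of whose levels embeds in some $\Sigma_{n+1}$ — which I would do by induction on simplicial degree. Since $N_1$ embeds in $\Sigma_2\cong\ZZ/2$, either $N_1$ is trivial, in which case the splitting $d_0s_0=d_1s_0=\id$ forces $N_0=1$ and then, degree by degree, $N_*=\{1\}$; or $N_1\cong\ZZ/2$. In the latter case I would pass to $N_2\hookrightarrow\Sigma_3$ and use the three faces $N_2\to N_1$, the two degeneracies, and the crossed homomorphism relations to see that $N_2$ is one of only a handful of subgroups of $\Sigma_3$ — generated by a $3$-cycle, by the reflection $i\mapsto 2-i$, by both (all of $\Sigma_3$) — possibly together with a central $\ZZ/2$ not detected by the vertex action; the structural relations then determine how each such possibility must continue into higher degrees, and one recognizes the resulting patterns as, in turn, the cyclic $C_*$, the reflexive $\{\ZZ/2\}$, the dihedral $D_*$, the symmetric $S_*$, the enlargement $\ZZ/2\times S_*$, and the hyperoctahedral $H_*$. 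A final direct check that each of these seven is itself reduced shows the list is complete and irredundant.

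The hard part is the last step: making the low-degree case analysis genuinely exhaustive, and disentangling the central-extension phenomena at the bottom of the list — determining exactly which extensions of the symmetric and dihedral crossed simplicial groups do and do not occur, so that precisely the seven listed groups appear (other natural candidates, such as the quaternionic crossed simplicial group, turn out not to be reduced and instead project onto one of the seven). This is exactly where one must use that the face and degeneracy maps are genuine \emph{crossed} homomorphisms rather than ordinary ones, and it is the combinatorial core of the theorem.
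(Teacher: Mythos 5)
This theorem is imported from Fiedorowicz--Loday (their Theorem~3.6); the paper quotes it without proof, so I am measuring your sketch against the known argument rather than against anything in the text. Your overall shape --- quotient by a maximal normal simplicial subgroup, then classify the ``reduced'' quotients by a degree-by-degree analysis --- is indeed the right one. But there is a genuine gap at the very first step. The assignment sending $g\in G_n$ to its induced permutation of $[n]$ commutes with the face operators but \emph{not} with the degeneracies, so it is not a morphism of crossed simplicial groups $G_*\to S_*$, and its levelwise kernel is not closed under the $s_i$. Concretely: in the reflexive group $\{\ZZ/2\}$ one has $s_i(\rho_n)=\rho_{n+1}$ (the full order-reversal), whereas the degeneracy in $S_*$ of the underlying permutation $\omega_n$ is $\omega_{n+1}$ composed with a transposition of the doubled pair; the same discrepancy occurs in $D_*$. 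Worse, in $\ZZ/2\times S_*$ the central element $\zeta_n$ acts trivially on $[n]$ yet $s_i(\zeta_n)$ acts by the transposition $(i,i+1)$, so your candidate $P_n=\{g: g|_{[n]}=\mathrm{id}\}$ is not even a crossed simplicial subgroup. For the same reason your later claim that each level of the reduced quotient embeds in $\Sigma_{n+1}$ fails for three of the seven targets ($\{\ZZ/2\}$ in degree $0$, $\ZZ/2\times S_*$, and $H_*$, whose $H_1$ has order $8$), so the induction anchored at $N_1\hookrightarrow\Sigma_2$ cannot be made exhaustive.

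The missing idea --- which is exactly the point of the hyperoctahedral group's appearance in the list --- is that the correct canonical invariant of $g\in G_n$ is not the permutation of vertices but the induced \emph{signed} permutation: the permutation of $[n]$ together with, at each $i$, the local orientation recording in which order the doubled pair $\{i,i+1\}$ is carried by $s_i(g)$ (equivalently, the action of $G_n$ on $\mathrm{Hom}_{\Delta G}([0],[n])\cong [n]\times G_0$ compatibly with the right $G_0$-action). This does assemble into a morphism of crossed simplicial groups $G_*\to H_*$; its kernel is a genuine simplicial group (your observation that trivial vertex action collapses the crossed relations to ordinary homomorphism relations then applies verbatim), and its image is a crossed simplicial subgroup of $H_*$. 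The combinatorial core is then the verification that $H_*$ has exactly seven crossed simplicial subgroups, namely the seven listed --- for instance, $(\ZZ/2)^{*+1}\subseteq H_*$ is \emph{not} closed under degeneracies, which is why no further entries occur. Without replacing $S_*$ by $H_*$ (or some equivalent device for detecting the local orientations), the ``central-extension phenomena'' you flag at the end cannot be disentangled, and the case analysis does not close up.
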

\begin{definition}
For a group $G$ and a normal subgroup $H \subseteq G$, the \textit{action groupoid} $G\sslash H$ is a groupoid with
\begin{enumerate}
\item objects being the elements of the group $G/H$;
\item the hom-sets $\mathrm{Hom}_{G \sslash H}(a, b)$ consisting of the elements $f \in G$ such that $af^{-1} = b$, where $f$ on the left hand side of the equality is identified with its image in $G/H$. 
\end{enumerate}
\end{definition}
\subsection{Main results of the section}
Fix a crossed simplicial group $G_*$ with the structural decomposition as in Theorem \ref{structure_thm_fied_lod}. Set
\begin{equation}
\label{eq:gamma_def}
\Gamma_n = \Gamma_n(G) = G_n \sslash P_n
\end{equation}
to be the action groupoid. We formulate our first theorem.

\begin{theorem}\label{thm:groupoid_construction}
Let $G_*$ be a crossed simplicial group with the decomposition as in Theorem~\ref{structure_thm_fied_lod}.
Then, the sequence of groupoids~\eqref{eq:gamma_def} forms a functor $\Gamma_* \colon (\Delta N)^\mathrm{op} \to \Gpd$. Moreover, for each $n$ the action $N_n \curvearrowright \Gamma_n$ is free and the orbit space $N_n \backslash \B\Gamma_n$ has the homotopy type of $K(G_n, 1)$.
\end{theorem}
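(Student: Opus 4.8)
The plan is to build the functor $\Gamma_* \colon (\Delta N)^{\mathrm{op}} \to \Gpd$ by exploiting the canonical splitting data for crossed simplicial groups. First I would recall that the structural projection $\pi \colon G_* \twoheadrightarrow N_*$, composed with the factorization of morphisms in $\Delta G$ into an $N$-automorphism part followed by a $\Delta$-part, lets us describe morphisms in $\Delta N$ concretely: a morphism $[m] \to [n]$ in $\Delta N$ is a pair $(\phi, \nu)$ with $\phi \in \Delta$ and $\nu \in N_m$. To each such morphism I must assign a functor $\Gamma_n \to \Gamma_m$ between action groupoids. The key observation is that, because $P_* = \ker \pi$ is normal in $G_*$ and the face/degeneracy maps are crossed homomorphisms (Lemma~\ref{action_lemma_fied_lod}), the simplicial operators $d_i, s_j \colon G_n \to G_{n-1}, G_{n+1}$ descend to well-defined maps of action groupoids $\Gamma_n \to \Gamma_{n\mp1}$: on objects they are the induced maps $G_n/P_n \to G_{n\mp1}/P_{n\mp1}$, and on morphisms they send $f \in G_n$ to $d_i(f)$ (resp.\ $s_j(f)$), with the crossed-homomorphism identity guaranteeing functoriality and well-definedness modulo $P$. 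The $N_m$-action on $\Gamma_n$ (pulled back along $\pi$) handles the automorphism part, and one checks the mixed relations using the face/degeneracy identities again. This is where the bulk of the routine verification lives, and I expect it to be the main technical obstacle — confirming that all the simplicial and crossed-module relations assemble coherently into a genuine functor out of $(\Delta N)^{\mathrm{op}}$; this is plausibly the content deferred to Appendix~\ref{ap:routine_verifications}.

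Next, for the freeness of the action $N_n \curvearrowright \Gamma_n$: the objects of $\Gamma_n$ are the cosets $G_n/P_n$, which, since $P_n = \ker(\pi_n)$, are canonically identified with $N_n$ via $\pi_n$; thus $N_n$ acts on the object set $G_n/P_n \cong N_n$ by (left) translation, which is visibly free. For the action on morphisms one checks it is compatible and has no fixed points, so the action on the groupoid $\Gamma_n$ — and hence on its classifying space $\B\Gamma_n$ — is free.

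Finally, for the homotopy type: the action groupoid $\Gamma_n = G_n \sslash P_n$ has, for any chosen object, automorphism group equal to the stabilizer, which here is $P_n$ (the morphisms $f$ with $af^{-1} = a$ in $G_n/P_n$ are exactly $f \in P_n$), and it has exactly $|G_n/P_n| = |N_n|$ isomorphism classes of objects, each connected component being equivalent to $\B P_n$. Hence $\B\Gamma_n \simeq \bigsqcup_{N_n} K(P_n,1)$, on which $N_n$ acts freely and transitively on components. Therefore the orbit space is
\[
N_n \backslash \B\Gamma_n \;\simeq\; \bigl(\textstyle\bigsqcup_{N_n} K(P_n,1)\bigr) / N_n \;\simeq\; \bigl(E N_n \times \textstyle\bigsqcup_{N_n} K(P_n,1)\bigr)/N_n,
\]
where in the last step I replace the free $N_n$-space by a homotopy quotient, which is legitimate since the action is already free. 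Unwinding, this is the Borel construction of an induced $N_n$-space $N_n \times_{P_n} \mathrm{pt}$ up to homotopy, i.e.\ $K(P_n,1) \times_{P_n} E N_n$ computed via the extension $1 \to P_n \to G_n \to N_n \to 1$, which is exactly $K(G_n,1)$. Concretely, one can argue directly: $N_n \backslash \B\Gamma_n$ is a connected space with fundamental group fitting into $1 \to \pi_1 K(P_n,1) \to \pi_1(N_n\backslash \B\Gamma_n) \to N_n \to 1$, i.e.\ $\pi_1 = G_n$, and with vanishing higher homotopy since the total space $\B\Gamma_n$ is aspherical and the quotient map is a covering. Thus $N_n \backslash \B\Gamma_n \simeq K(G_n,1)$, completing the proof.
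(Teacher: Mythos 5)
There is a genuine gap, and it sits at the heart of the homotopy-type computation. You describe $\Gamma_n = G_n \sslash P_n$ as having $|N_n|$ isomorphism classes of objects, each component equivalent to $\B P_n$, so that $\B\Gamma_n \simeq \bigsqcup_{N_n} K(P_n,1)$ with $N_n$ permuting the components freely and transitively. This is not the groupoid defined in the paper: $\mathrm{Hom}_{G_n \sslash P_n}(a,b)$ consists of \emph{all} $f \in G_n$ with $af^{-1}=b$, so the groupoid is connected (any two cosets are joined by a morphism), the automorphism group of each object is the stabilizer $P_n$, and $\B\Gamma_n$ is a single connected $K(P_n,1)$. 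Your picture is what you would get if morphisms were labelled only by elements of $P_n$. The error is not cosmetic: with your description, the quotient by a free action that is transitive on components would be $K(P_n,1)$, not $K(G_n,1)$, so your Borel-construction manipulation and your final covering-space argument (which tacitly assumes $\B\Gamma_n$ is connected) contradict your own premise. The paper's argument instead exhibits an explicit isomorphism of simplicial sets $N_n\backslash N\Gamma_n \cong N(G_n^{\mathrm{op}})$, which both identifies the quotient as $K(G_n,1)$ and pins down the extension $1 \to P_n \to \pi_1 \to N_n \to 1$ as the one given by $G_n$ --- a point your covering-space sketch leaves unaddressed even after correcting the connectedness.

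A second, smaller gap: you propose that the simplicial operators act on morphisms by $f \mapsto d_i(f)$ (resp.\ $s_j(f)$), with the crossed-homomorphism identity "guaranteeing functoriality." The naive formula fails already on sources and targets: for $[\sigma,f]\colon \sigma \to \sigma f^{-1}$ one needs the image morphism to land in $\mathrm{Hom}(d_i(\sigma), d_i(\sigma f^{-1}))$, and since $d_i(\sigma f^{-1}) = d_i(\sigma)\, d_{\sigma^{-1}(i)}(f^{-1})$, the correct assignment is $[\sigma,f] \mapsto [d_i(\sigma),\, d_{\sigma^{-1}(i)}(f^{-1})^{-1}]$ --- the index must be twisted by $\sigma^{-1}$ and a double inversion is forced because $d_i, s_i$ are not group homomorphisms on $G_n$. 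Functoriality of this twisted formula is then exactly where the crossed-group axiom enters, and the simplicial identities require a case analysis (deferred to the appendix in the paper). There is also no "well-definedness modulo $P$" to check, since morphisms are honest elements of $G_n$, not cosets. Your treatment of the freeness of the $N_n$-action and the general strategy of verifying the $\Delta N$-functoriality via the face/degeneracy identities of Lemma~\ref{action_lemma_fied_lod} are fine.
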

The theorem above would imply the following.
\begin{corollary}\label{thm_corollary}
The sequence of spaces $\{\B\Gamma_n\}_{n \ge 0}$ forms a $N_*$-space such that 
\begin{enumerate}
\item The orbit space $N_n \backslash \B\Gamma_n$ has the homotopy type of $K(G_n, 1)$ for each \(n\);
\item The sequence universal covering spaces $\{\B\Gamma_n\langle 1\rangle\}_{n \ge 0}$ form a $\Delta G_*$-space such that the action \(G_n \curvearrowright B\Gamma_n\langle 1 \rangle\) is free for each \(n\).
That is, $\B\Gamma_n\langle 1 \rangle$ is a $EG_n$ space.
\end{enumerate}
\end{corollary}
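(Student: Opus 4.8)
The plan is to deduce both assertions directly from Theorem~\ref{thm:groupoid_construction} by applying the classifying-space functor $\B$ and tracking how it interacts with the free $N_*$-action and with passage to universal covers. First I would recall that $\B$ preserves products and carries a free action of a discrete group $N_n$ on a groupoid $\Gamma_n$ to a free (in particular, properly discontinuous) action on the space $\B\Gamma_n$, and that it commutes with taking orbits in the sense that $\B(N_n\backslash\Gamma_n)\cong N_n\backslash\B\Gamma_n$ since $N_n$ acts freely; applying this degreewise and using functoriality of $\B$ over $(\Delta N)^{\mathrm{op}}$, the sequence $\{\B\Gamma_n\}$ is an $N_*$-space, and part~(1) is just the $\B$-image of the last clause of Theorem~\ref{thm:groupoid_construction} together with the identity $\B(G_n\sslash P_n)\simeq K(G_n,1)$ read off from the fact that $G_n\sslash P_n$ has $G_n$ as the automorphism group of each object and is connected when $P_n$ is... more precisely, $\B\Gamma_n$ has fundamental group $G_n$ and is aspherical because $\Gamma_n$ is a $1$-type, so it is a $K(G_n,1)$. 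Wait—I should be careful: $\Gamma_n = G_n\sslash P_n$ need not be connected; its components are indexed by the double cosets, but since $P_n$ is normal the quotient set is the group $G_n/P_n$ on which $G_n$ acts transitively, so $\Gamma_n$ is connected, and $\pi_1(\B\Gamma_n) \cong G_n$ as the loop at any object. This gives part~(1) directly, and in fact re-derives the orbit statement.

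For part~(2), the key point is that the universal cover $\B\Gamma_n\langle 1\rangle$ of the connected groupoid $\Gamma_n = G_n\sslash P_n$ is itself the classifying space of a groupoid: explicitly, $\B\Gamma_n\langle 1\rangle \simeq \B(\text{the covering groupoid corresponding to } P_n\le G_n)$, which is the contractible groupoid $G_n\sslash 1$ (equivalently $EG_n$, the translation groupoid of $G_n$ acting on itself). Thus $\B\Gamma_n\langle1\rangle$ is contractible and carries a free $G_n$-action with quotient $\B\Gamma_n \simeq K(G_n,1)$, so it is a model for $EG_n$. I would then upgrade the degreewise universal covers to a functor on $(\Delta G)^{\mathrm{op}}$: the structural projection $\pi\colon G_*\twoheadrightarrow N_*$ lets one pull back the $N_*$-object $\Gamma_*$ along $\Delta G\to\Delta N$, and the pointed/universal-cover construction is functorial because the structure maps of $\Gamma_*$ are morphisms of groupoids that (being induced by the crossed-simplicial structure) respect basepoints up to the canonical translation; concretely $\B\Gamma_n\langle1\rangle = \B(G_n\sslash 1)$ and the simplicial operations act through $G_*$ by the crossed-homomorphism identities of Lemma~\ref{action_lemma_fied_lod}, giving the $\Delta G_*$-space structure with the action of $G_n$ free on each $\B\Gamma_n\langle1\rangle$.

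The main obstacle I anticipate is not the homotopy-theoretic content—once one knows $\Gamma_n$ is connected with $\pi_1 = G_n$ and that its universal cover is the contractible translation groupoid $G_n\sslash 1$, everything follows—but rather the \emph{functoriality of the universal-cover construction over $(\Delta G)^{\mathrm{op}}$}: one must check that the face and degeneracy morphisms $\Gamma_m\to\Gamma_n$, which are maps of groupoids rather than group homomorphisms, lift coherently to the chosen universal covers and are compatible with the $G_*$-action as in Lemma~\ref{action_lemma_fied_lod}. I would handle this by giving the universal covers the explicit model $\B(G_n\sslash1)$ and defining the lifted structure maps via the element-level formulas for the crossed-simplicial operations, then verifying the face/degeneracy identities by the same crossed-homomorphism computation that proves Theorem~\ref{thm:groupoid_construction}, so that part~(2) is obtained with no new input beyond bookkeeping.
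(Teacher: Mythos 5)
There is a genuine error in your identification of the homotopy type of $\B\Gamma_n$, and it is the load-bearing step of your argument for part (1). The automorphism group of an object $a \in G_n/P_n$ of the action groupoid $\Gamma_n = G_n \sslash P_n$ is $\{f \in G_n : \overline{af^{-1}} = \overline{a}\} = P_n$, not $G_n$; since $\Gamma_n$ is connected, $\B\Gamma_n \simeq K(P_n,1)$ (the paper states exactly this just before Lemma~\ref{quotient_G}). So your claim that ``$\B\Gamma_n$ has fundamental group $G_n$, hence is a $K(G_n,1)$'' is false, and it cannot ``re-derive the orbit statement'': if $\B\Gamma_n$ really were a $K(G_n,1)$, its quotient by the free $N_n$-action would be a $K(E,1)$ for an extension $E$ of $N_n$ by $G_n$, not a $K(G_n,1)$. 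Part (1) concerns the \emph{orbit space} $N_n\backslash\B\Gamma_n$, and it is verbatim the last clause of Theorem~\ref{thm:groupoid_construction}, proved there via Lemma~\ref{quotient_G} by identifying $N_n\backslash N\Gamma_n$ with $N(G_n^{\mathrm{op}})$ --- it is a restatement, not a consequence of a computation of $\pi_1(\B\Gamma_n)$. The same confusion resurfaces in part (2): the free $G_n$-quotient of $\B(G_n\sslash 1)$ is $\B G_n \simeq N_n\backslash\B\Gamma_n$, whereas it is the quotient by the deck group $P_n$ that recovers $\B\Gamma_n$; your sentence ``free $G_n$-action with quotient $\B\Gamma_n\simeq K(G_n,1)$'' conflates the two coverings.

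That said, the essential content of your part (2) coincides with the paper's proof: the paper also models $\Gamma_n\langle1\rangle$ by the slice groupoid $x_0/\Gamma_n$, which is canonically the contractible translation groupoid $G_n\sslash 1$ carrying a free $G_n$-action, and it likewise isolates the real difficulty as the functoriality of this construction over $(\Delta G)^{\mathrm{op}}$ (the $N_*$-action permutes objects transitively, so $\Gamma_*$ is not a functor to pointed groupoids). The paper asserts that the extension to $(\Delta G)^{\mathrm{op}}$ is straightforward and omits the verification; your plan to carry it out via the element-level crossed-homomorphism formulas is the right one. Once the $P_n$-versus-$G_n$ bookkeeping above is corrected, your outline becomes the paper's argument.
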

We prove these two results below.

% Call a crossed simplicial group $G_*$ \textit{symmetric} if the structure map $\pi \colon G_* \to S_*$ is epi and admits a (non-crossed) simplicial group kernel. That is, in the structure theorem (Theorem 3.6 of ~\cite{f0e21457-c8c3-31f9-bd48-762cf5f699ae}) the group $G_*$ is an extension 
% \begin{equation}\label{structure_G}
%   1 \to P \to G_* \xrightarrow{\pi} S_* \to 1.  
% \end{equation}
% The present article is devoted mainly to the study of symmetric crossed simplicial groups. For this reason, the proof of the theorem \ref{thm:groupoid_construction} will be given for the case $G'' = S_*$. However, we want to note that the same logic works for any of the 7 possible $G''$ groups. All of the crossed simplicial groups considered below are assumed to be symmetric, unless stated otherwise.

\subsection{Proof of Theorem~\ref{thm:groupoid_construction}}

In the action groupoid~$\Gamma_n$ we denote a morphism $\sigma \to \tau$ given by $f \in G_n$ with the symbol
$$[\sigma, f] \in \Gamma_n(\sigma, \sigma f^{-1}).$$
Then, the composition can be described by the formula
$$[\sigma f^{-1}, g] \cdot [\sigma, f] = [\sigma, gf].$$
Next, we verify that these groupoids form a simplicial groupoid. 

\begin{construction}
For each $i = 0, \ldots, n$ define $s_i \colon \Gamma_n \to \Gamma_{n+1}$ by the formula
\begin{equation}\label{degeneracy_groupoid}
s_i[\sigma, f] = [s_i(\sigma), s_{\sigma^{-1}(i)}(f^{-1})^{-1}].
\end{equation}
This defines a correct map~$\mathrm{Hom}_{\Gamma_n}(\sigma, \tau) \to \mathrm{Hom}_{\Gamma_{n+1}}(s_i(\sigma), s_i(\tau))$, since for every \([\sigma, f] \colon \sigma \to \tau\) one has
\begin{align*}
\mathrm{dom}_{\Gamma_{n+1}}(s_i([\sigma, f])) &= \mathrm{dom}_{\Gamma_{n+1}}([s_i(\sigma), s_{\sigma^{-1}(i)}(f^{-1})^{-1}]) = s_i(\sigma), \\
\mathrm{cod}_{\Gamma_{n+1}}(s_i([\sigma, f])) &= \mathrm{cod}_{\Gamma_{n+1}}([s_i(\sigma), s_{\sigma^{-1}(i)}(f^{-1})^{-1}]) = s_i(\sigma) \cdot s_{\sigma^{-1}(i)}(f^{-1}) = s_i(\sigma f^{-1}) = s_i(\tau). \\
\end{align*}
(This double inversion in~\eqref{degeneracy_groupoid} is necessary because the degeneration maps $s_* \colon G_n \to G_{n-1}$ are not group homomorphisms.)

Similarly, define the face maps $d_i \colon \Gamma_n \to \Gamma_{n-1}$ by the formula 
\begin{equation}\label{face_groupoid}
d_i[\sigma, f] = [d_i(\sigma), d_{\sigma^{-1}(i)}(f^{-1})^{-1}].
\end{equation}
\end{construction}

\begin{lemma}\label{simplicial_groupoid}
The maps defined in {\rm(}\ref{degeneracy_groupoid}{\rm)} and {\rm(}\ref{face_groupoid}{\rm)} give $\{\Gamma_n\}_{n \geq 0}$ the structure of a simplicial groupoid. 
\end{lemma}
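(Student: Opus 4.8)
The plan is to verify directly the simplicial identities for the maps $d_i$ and $s_i$ defined in~\eqref{degeneracy_groupoid} and~\eqref{face_groupoid}, on top of the already-checked fact that each is a well-defined functor on objects and arrows. Concretely, a simplicial groupoid is a functor $\Delta^{\mathrm{op}} \to \Gpd$, so it suffices to check the five families of cosimplicial/simplicial relations ($d_i d_j = d_{j-1} d_i$ for $i<j$, the three relations between $d_i$ and $s_j$, and $s_i s_j = s_{j+1} s_i$ for $i \le j$) at the level of the generating morphisms $[\sigma, f]$ of $\Gamma_n$. Each such relation has two ingredients: the object component, which is governed by the values $d_i(\sigma)$, $s_i(\sigma)$ in the crossed simplicial group $N_*$ (where the simplicial identities already hold since $N_*$ is a simplicial set), and the ``label'' component, i.e. the group element attached. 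So the real content is bookkeeping of the labels.

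First I would record the following reformulation to tame the double inversions. Writing $g = f^{-1}$, the face map reads $d_i[\sigma, f] = [d_i \sigma,\, (d_{\sigma^{-1}(i)} g)^{-1}]$, i.e. as a morphism its label is $d_i(f^{-1} \cdot ?)$-type expression; it is cleaner to track everything in terms of how $d_i$ acts on the \emph{inverse} labels, because the crossed-homomorphism identity $d_i(g_1 g_2) = d_i(g_1)\, d_{g_1^{-1}(i)}(g_2)$ from Lemma~\ref{action_lemma_fied_lod} is stated for the face maps of the crossed simplicial group itself. Then I would carry out one representative verification in detail, say $d_i d_j = d_{j-1} d_i$ for $i < j$: applying the formula twice produces, on labels, a nested expression of the form $d_{?}\big( (d_{?} g)^{-1} \big)^{-1}$, and one expands the inner $d$ via the crossed-homomorphism rule, keeping careful track of which vertex index appears (these indices shift exactly as in $\Delta$, which is what makes the permutation-twisted face relations of the crossed simplicial group close up). The identities in Theorem~\ref{structure_thm_fied_lod}/Lemma~\ref{action_lemma_fied_lod} applied to $N_*$ and to $G_*$ are precisely engineered so that the two sides agree. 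The remaining relations are analogous, with the mixed $d$–$s$ relations additionally using that $s_j$ of an identity arrow stays an identity arrow and that degeneracies and faces in $N_*$ interact by the standard rules.

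The main obstacle I anticipate is purely notational rather than conceptual: the double inversions in~\eqref{degeneracy_groupoid}–\eqref{face_groupoid} together with the subscript twisting $\sigma \mapsto \sigma^{-1}(i)$ make the label computations error-prone, and one has to be scrupulous about whether a given index like $g^{-1}(i)$ refers to the action of the label before or after a face/degeneracy has been applied. To keep this under control I would fix the convention of always rewriting a label as (inverse of) a value of $d_\bullet$ or $s_\bullet$ on $f^{-1}$, push all inversions to the outside, and only then compare vertex indices; with that discipline each relation reduces to an instance of the corresponding relation in $N_*$ (for the object part) and an instance of a crossed-homomorphism identity in $G_*$ (for the label part). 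Since the computations are routine but lengthy, I would state here only the representative case $d_i d_j = d_{j-1}d_i$ and the case $s_i s_j = s_{j+1} s_i$, and relegate the full list of verifications to Appendix~\ref{ap:routine_verifications}, merely asserting that the remaining identities follow by the same manipulation.
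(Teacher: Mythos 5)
Your plan for the simplicial identities matches the paper's approach: a direct case-by-case verification on morphisms $[\sigma,f]$, splitting each relation into the object part (which holds because the objects carry the simplicial structure of $N_*$) and the label part (which, after the double inversions cancel, reduces to an ordinary simplicial identity in $G_*$ applied to $f^{-1}$, with the shifted indices controlled by formulas for $d_j(\sigma)^{-1}(i)$ and $s_j(\sigma)^{-1}(i)$ --- exactly the content of Lemma~\ref{symm_identities} and Appendix~\ref{ap:routine_verifications}). That part of your outline, if executed, would go through.

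There is, however, a genuine gap at the start: you take as ``already-checked'' that each $d_i$ and $s_i$ is a well-defined \emph{functor}. What is actually checked before the lemma (in the Construction) is only that $s_i$ carries $\mathrm{Hom}_{\Gamma_n}(\sigma,\tau)$ into $\mathrm{Hom}_{\Gamma_{n+1}}(s_i\sigma,\, s_i\tau)$, i.e.\ compatibility with sources and targets. Preservation of composition is not automatic, and it is the one substantive computation in the paper's own proof of this lemma: comparing $s_i([\sigma f^{-1},g]\cdot[\sigma,f])$ with $s_i([\sigma f^{-1},g])\cdot s_i([\sigma,f])$ forces the identity
\[
s_{\sigma^{-1}(i)}(f^{-1}g^{-1}) \;=\; s_{\sigma^{-1}(i)}(f^{-1})\cdot s_{f\cdot\sigma^{-1}(i)}(g^{-1}),
\]
which is precisely the crossed-homomorphism (degeneracy) axiom of $G_*$. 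Your proposal never performs this check; instead you invoke the crossed-homomorphism rule inside the $d_id_j$ computation, where it is not actually needed --- the label of $d_id_j[\sigma,f]$ is a single twisted double face of $f^{-1}$, no product of group elements appears, and the relation reduces to the ordinary identity $d_ad_b=d_{b-1}d_a$ in the simplicial set underlying $G_*$. So the crossed-group axioms do enter the argument, but in the functoriality step you skipped rather than in the step where you placed them. Adding the composition-preservation check (for both $s_i$ and $d_i$) completes the argument and brings it in line with the paper's proof.
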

\begin{proof}
First, we claim that $d_i$ and $s_i$ are functors of groupoids.
We show this for degeneracies only, and the case of faces is completely analogous.
Note that
$$s_i([\sigma f^{-1}, g] \cdot [\sigma, f]) = s_i([\sigma, gf]) = [s_i(\sigma), s_{\sigma^{-1}(i)}((gf)^{-1})^{-1}],$$
while
$$s_i([\sigma f^{-1}, g]) \cdot s_i([\sigma, f]) = [s_i(\sigma f^{-1}), s_{(\sigma f^{-1})^{-1}(i)}(g^{-1})^{-1}] \cdot [s_i (\sigma), s_{\sigma^{-1}(i)}(f^{-1})^{-1}].$$
Therefore, functoriality requires the identity
$$s_{\sigma^{-1}(i)}((gf)^{-1})^{-1} = s_{(\sigma f^{-1})^{-1}(i)}(g^{-1})^{-1} \cdot s_{\sigma^{-1}(i)}(f^{-1})^{-1},$$
which is equivalent to 
$$s_{\sigma^{-1}(i)}(f^{-1}g^{-1}) = s_{\sigma^{-1}(i)}(f^{-1}) \cdot s_{f \cdot \sigma ^{-1}(i)}(g^{-1}). $$
This latter follows from the crossed group axioms. The claim is proved. We verify the simplicial idenitities in the Appendix \ref{ap:routine_verifications}. \end{proof}

Thus, the sequence $\{\Gamma_n\}_{n \ge 0}$ forms a simplicial groupoid $\Gamma_* \colon \Delta^\mathrm{op} \to \Gpd$.
Moreover, the group $N_n$ acts on $\Gamma_n$ on the left via the formula
$$\tau \cdot [\sigma, f] = [\tau \sigma, f].$$

We claim that these actions define a functor $\Gamma \colon (\Delta N)^\mathrm{op} \to \Gpd$. We will verify the required identities from Lemma \ref{action_lemma_fied_lod}.
\begin{enumerate}
\item Face identities: for $\sigma \in N_n$ and $x \in \Gamma_n$ one has
$$d_i(\sigma x) = d_i(\sigma)d_{\sigma^{-1}(i)}(x).$$
Indeed,
\begin{align*}
d_i([\tau \sigma, f]) &= [d_i(\tau) \cdot d_{\tau^{-1}(i)}(\sigma), d_{\sigma^{-1}\tau^{-1}(i)}(f^{-1})^{-1}] \\ &= d_i(\tau) \cdot [ d_{\tau^{-1}(i)}(\sigma), d_{\sigma^{-1}\tau^{-1}(i)}(f^{-1})^{-1}] \\ &= d_i(\tau) \cdot d_{\tau^{-1}(i)}([\sigma, f]).
\end{align*}
    
\item Degeneracy identities: for $\sigma \in N_n$ and $x \in \Gamma_n$ one has $$s_i(\sigma x) = s_i(\sigma ) \cdot s_{\sigma^{-1}(i)}(x),$$
which can be checked analogously. 
\end{enumerate}

The homotopy type of $\B\Gamma_n$ is that of $K(P_n, 1)$, since $\Gamma_n$ is a connected groupoid with fundamental group $P_n$ and groupoids are $1$-truncated.

\begin{lemma}\label{quotient_G}
The quotient space $N_n\backslash \B\Gamma_n$ has the homotopy type of $K(G_n, 1)$. 
\end{lemma}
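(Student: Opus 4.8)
The plan is to identify the orbit space $N_n\backslash\B\Gamma_n$ with the classifying space of the orbit groupoid $N_n\backslash\Gamma_n$, and then to recognise $N_n\backslash\Gamma_n$ as the one-object groupoid $\mathbf{B}G_n$ whose automorphism group is $G_n$.

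First I would check that the action of $N_n$ on $\Gamma_n$ is free. On the object set $G_n/P_n$ it is, through the canonical isomorphism $G_n/P_n\cong N_n$, the left-translation action of $N_n$ on itself, which is free and transitive; and since any element of $N_n$ fixing a $k$-simplex of the nerve $N\Gamma_n$ fixes in particular its initial vertex, the induced action on each set of $k$-simplices of $N\Gamma_n$ is free, so $N_n$ permutes the cells of $\B\Gamma_n=|N\Gamma_n|$ freely. Because geometric realization is a left adjoint it commutes with the colimit defining the quotient, and a free cellular action of a discrete group gives an honest covering space; hence $N_n\backslash\B\Gamma_n=|N\Gamma_n|/N_n\cong|N\Gamma_n/N_n|$. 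Finally, the quotient functor $\Gamma_n\to N_n\backslash\Gamma_n$ is a covering of groupoids (unique lifting of morphisms, by freeness), which identifies the $k$-simplices of $N(N_n\backslash\Gamma_n)$ with the $N_n$-orbits of $k$-simplices of $N\Gamma_n$, that is, $N(N_n\backslash\Gamma_n)\cong N\Gamma_n/N_n$. Putting these isomorphisms together gives $N_n\backslash\B\Gamma_n\cong\B(N_n\backslash\Gamma_n)$.

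Next I would compute $N_n\backslash\Gamma_n$. As $N_n$ acts transitively on the objects, this groupoid has a single object, with endomorphism monoid $\mathrm{Mor}(\Gamma_n)/N_n$. The assignment $[\sigma,f]\mapsto f$ defines a functor $\Gamma_n\to\mathbf{B}G_n$: functoriality is precisely the composition rule $[\sigma f^{-1},g]\cdot[\sigma,f]=[\sigma,gf]$, and it sends identities to identities. This functor is $N_n$-invariant, since $\tau\cdot[\sigma,f]=[\tau\sigma,f]$ carries the same label $f$, so it descends to a functor $N_n\backslash\Gamma_n\to\mathbf{B}G_n$ which on morphisms is the map $\mathrm{Mor}(\Gamma_n)/N_n\to G_n$, $[\sigma,f]\mapsto f$. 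That map is surjective because every $f\in G_n$ is the label of a morphism, and injective because two morphisms of $\Gamma_n$ with equal label lie in one $N_n$-orbit by transitivity of $N_n$ on objects; hence the descended functor is an isomorphism. Therefore $N_n\backslash\Gamma_n\cong\mathbf{B}G_n$, whose classifying space is $K(G_n,1)$, and the lemma follows.

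The only genuinely non-formal point is the interchange $N_n\backslash\B\Gamma_n\cong\B(N_n\backslash\Gamma_n)$ in the second paragraph; this rests on the freeness of the $N_n$-action established there, together with the standard facts that geometric realization preserves colimits and that a free cellular action of a discrete group on a CW complex produces a covering space. All the remaining steps are straightforward manipulations with the explicit formulas for $\Gamma_n$ and the $N_n$-action.
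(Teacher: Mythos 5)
Your proof is correct and is essentially the paper's argument: both identify the quotient of the nerve of $\Gamma_n$ by $N_n$ with the nerve of the one-object groupoid on $G_n$ by discarding the (freely and transitively permuted) object coordinate and retaining the labels $f_1,\dots,f_m$. You merely perform the quotient at the level of groupoids before taking nerves, whereas the paper writes the bijection directly on $m$-simplices of $N_n\backslash N\Gamma_n$; the extra scaffolding (freeness, realization preserving colimits) that you make explicit is implicitly used there as well.
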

\begin{proof}
We work simplicially. We show that  the simplicial set $N_n \backslash N\Gamma_n$ is isomorphic to the nerve $N(G_n^\mathrm{op})$.
It would imply the statement of lemma, since
\[|NG_n^\mathrm{op}| \cong |NG_n| \simeq K(G_n, 1).\]
    
An $m$-simplex in the nerve $N\Gamma_n$ is determined by a starting object $\sigma \in N$ and a sequence $f_1, \ldots, f_m \in G_n$ corresponding to the composite morphism
        $$[\sigma f_1^{-1} f_2^{-1}\ldots\cdot f_{m-1}^{-1}, f_m]\cdot\ldots\cdot [\sigma f_1^{-1}, f_2] \cdot [\sigma, f_1] $$
The $N_n$-action quotients out the objects $\sigma \in N_n$, leaving precisely the simplicial set $NG_n$. More explicitly, the map $$\{[\sigma f_1^{-1} f_2^{-1}\ldots\cdot f_{m-1}^{-1}, f_m]\cdot\ldots\cdot [\sigma f_1^{-1}, f_2] \cdot [\sigma, f_1]\}_{\sigma \in N_n} \mapsto (f_m, \ldots, f_1)$$
gives a bijection $(N_n\backslash N\Gamma_n)_m \to N(G_n^\mathrm{op})_m$. A routine check shows that it is a simplicial mapping.
\end{proof}
This completes the proof of the theorem.

\subsection{Proof of Corollary~\ref{thm_corollary}}
The first claim of Corollary~\ref{thm_corollary} trivially follows from Theorem~\ref{thm:groupoid_construction}.

To address the second claim, we first clarify our use of the term \emph{covering space}. There are two possible approaches. The first follows Fiedorowicz’s construction of the universal cover of the configuration space, which involves selecting an appropriate subspace of the universal cover to act as a ``generalized basepoint''. The second approach, which we adopt, proceeds within the category of groupoids. 

For a connected groupoid $\Gamma$, a standard functorial construction of its covering is given by the slice groupoid $x_0 / \Gamma$ for a fixed object $x_0$. This construction is monoidal functorial in the category of pointed groupoids. However, our functor $\Delta N \to \Gpd$ cannot be regarded as taking values in pointed groupoids, since the $N_*$--action permutes the objects transitively. 

To circumvent this, we consider the composition
\[
\Gamma\langle 1 \rangle \colon \Delta^{\mathrm{op}} \to \Gpd_* \xrightarrow{\text{slice}} \Gpd_*.
\]
We claim that $\Gamma\langle 1 \rangle$ extends naturally to a functor 
\[
(\Delta G)^{\mathrm{op}} \to \Gpd.
\]
This extension is straightforward to verify, and we omit the details.

% Now consider the composition with the universal covering functor (the first stage of the Whitehead tower):
% $$(\Delta S)^\mathrm{op} \xrightarrow{\Gamma} \Gpd \xrightarrow{B} \Top \xrightarrow{(-)\langle 1 \rangle} \Top.$$
% By Lemma \ref{quotient_G}, the group $G_n$ acts on each space $B\Gamma_n\langle 1 \rangle$. We claim that these actions are coherent, in the sense that they assemble into a functor $(\Delta G)^\mathrm{op} \to \Top$. Moreover, since the $S_n$-actions are free, we have $B\Gamma_n\langle 1\rangle  = EG_n$. It remains to verify the two equivariance axioms for the $G_n$-action. \textcolor{red}{They will appear here}. 

\section{Operadic crossed simplicial groups}\label{sec:operadic_crossed_simplicial_groups}

The aim of this section is to introduce several enhancements of the notion of a crossed simplicial group, and to demonstrate some examples.

Let $S_*$ be the crossed simplicial symmetric group.  Recall that $S_n = \Sigma_{n+1} \overset{\mathrm{def}}{=} \mathrm{Bij}(\{0, \ldots, n\})$, the group of bijections of the set~$[n]$. First, we wish to discuss some additional structure that is present on the crossed simplicial group $S_*$. 

% We set $S_n = \Sigma_{n+1} = \mathrm{Bij}(\{0,\ldots,n\})$, the group of bijections of the set~$[n]$. First, we wish to discuss some additional structure that is present on the crossed simplicial group $S_*$. 
% We begin by discussing certain additional structures that are naturally present on the crossed simplicial group $S_*$.

\begin{enumerate}
  \item There is the left contracting simplicial homotopy $s_L$, which adds a fixed point to the left:
  $$s_L \colon \sigma \mapsto (1, \sigma(0) + 1, \sigma(1)+1, \ldots, \sigma(n)+1).$$
  \item There is the right conctracting simplicial homotopy $s_R$, which adds a fixed point to the right:
  $$s_R \colon \sigma \mapsto (\sigma(0), \sigma(1), \ldots, \sigma(n), n+1).$$
  \item These two contracting homotopies define the block sum of permutations: for $\sigma \in S_n, \tau \in S_m$ the block sum is the permutation $\sigma \boxplus \tau \in S_{n + m + 1}$ given by the sequence
  $$\sigma(0), \ldots, \sigma(n), \tau(0) + n + 1, \ldots, \tau(m) + n + 1.$$
  This is a group homomorphism $S_n \times S_m \to S_{n + m + 1}.$
  \item Lastly, there is the standard (see ~\cite[Proposition~1.1.9.]{Fresse-book}) $\Set$-operad structure on the sequence $\{S_n\}$. The definition itself is completely analogous to the one we will give in Theorem~\ref{thm:set_operad}.
\end{enumerate}
Now we introduce the first enhancement.

\begin{definition}
\label{def:monoidal_csg}
\begin{enumerate}
    \item A left contractible symmetric simplicial group is a pair $(G, s_L)$, where $G$ is a symmetric simplicial group and $s_L = \{s_{-1}\}_{n \geq 0}$ is a left contraction homotopy\footnote{For more information on simplicial contractions, see \cite{Barr2019}}, such that it is a group homomorphism, making the following diagram commutative % https://q.uiver.app/#q=WzAsNCxbMCwwLCJHX24iXSxbMCwxLCJTX24iXSxbMiwwLCJHX3tuKzF9Il0sWzIsMSwiU197bisxfSJdLFswLDEsIlxccGkiLDJdLFsyLDMsIlxccGkiXSxbMCwyLCJzX0wiXSxbMSwzLCJzX0xee1xcbWF0aGJme1N9fSIsMl1d
\[\begin{tikzcd}
  {G_n} && {G_{n+1}} \\
  {S_n} && {S_{n+1}.}
  \arrow["{s_L}", from=1-1, to=1-3]
  \arrow["\pi"', from=1-1, to=2-1]
  \arrow["\pi", from=1-3, to=2-3]
  \arrow["{s_L}"', from=2-1, to=2-3]
\end{tikzcd}\]
\item Right contractible symmetric simplicial group is a pair $(G, s_R)$, where $G$ is a symmetric simplicial group and $s_R = \{s_{n+1}\}_{n \geq 0}$ is a right contraction homotopy, such that it is a group homomorphism, making an analogous diagram commutative. 
\item Ambi contractible symmetric simplicial group is a triple $(G, s_L, s_R)$ such that $(G, s_L)$ is a left contractible symmetric simplicial group and $(G, s_R)$ is a right contractible symmetric simplicial group. 
\end{enumerate}
\end{definition}
In what follows, by a slight abuse of notation, we call a symmetric group $G_*$ ambi contractible, tacitly fixing the maps $s_L$ and $s_R$ and omitting them from the notation.

The second enhancement of the notion of a crossed simplicial groups comes from the observation that the block sum of permutations may be expressed in terms of the operators $s_L$ and $s_R$. Explicitly, for $\sigma \in S_n$ and $\tau \in S_m$, one has $\sigma \boxplus \tau = (s_R)^{\circ m + 1}(\sigma) \cdot (s_L)^{\circ n + 1}(\tau)$. 

\begin{definition}\label{def:monoidal_p_csg}
A symmetric ambi contractible crossed simplicial group $G_*$ is said to be \textit{monoidal} if the map
\[
\boxplus \colon G_n \times G_m \longrightarrow G_{n+m+1}, \qquad
(\alpha,\beta) \longmapsto (s_R)^{\circ (m+1)}(\alpha)\cdot (s_L)^{\circ (n+1)}(\beta),
\]
is a group homomorphism. Equivalently, $G_*$ is \textit{monoidal} if and only if the elements $(s_R)^{\circ (m+1)}(\alpha)$ and $(s_L)^{\circ (n+1)}(\beta)$ commute for all $\alpha \in G_n$ and $\beta \in G_m$.

We further introduce the notation $1_n$ for the identity element of $G_n$; thus 
\[
\alpha \boxplus 1_m = (s_R)^{\circ m}(\alpha)
\qquad\text{and}\qquad
1_n \boxplus \beta  = (s_L)^{\circ n}(\beta).
\]
\end{definition}

The following lemma is just unpacking the definition. 
\begin{lemma}
    Let $G_n$ be a monoidal crossed simplicial group. Then 
    \begin{enumerate}
        \item The disconnected groupoid $\bigsqcup_{n \geq 0} G_n$ is a monoidal category under $\boxplus$;
        \item The space $\bigsqcup_{n \geq 0} \B G_n$ is a strict topological monoid under $\B\,\boxplus$. 
    \end{enumerate}
\end{lemma}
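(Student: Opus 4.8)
The plan is to verify directly that the map $\boxplus$ equips the sequence $\{G_n\}_{n \geq 0}$ with the structure of a (non-unital, then unital) monoid object in the category of categories, and then to apply the classifying space functor, which is monoidal.

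First I would set up the bookkeeping. Since each $G_n$ is regarded as a one-object groupoid (equivalently, a group), the disjoint union $\bigsqcup_{n \geq 0} G_n$ is a groupoid whose objects are the natural numbers and whose automorphism group at $n$ is $G_n$; all hom-sets between distinct objects are empty. A monoidal structure on this disconnected groupoid with $n \otimes m = n+m+1$ on objects is then precisely the data of: (a) a functor $\boxplus$, which on morphisms is the assignment $(\alpha,\beta) \mapsto \alpha \boxplus \beta$, and functoriality of this is exactly the assertion that $\boxplus \colon G_n \times G_m \to G_{n+m+1}$ is a group homomorphism — which holds by hypothesis, since $G_*$ is monoidal; (b) a unit object, for which we take $0 \in \mathbb{N}$ with unit morphism $1_0 \in G_0$; (c) associativity and unit coherence isomorphisms.

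The key step is checking strict associativity and strict unitality, i.e. $(\alpha \boxplus \beta) \boxplus \gamma = \alpha \boxplus (\beta \boxplus \gamma)$ and $1_0 \boxplus \alpha = \alpha = \alpha \boxplus 1_0$, so that the coherence isomorphisms can be taken to be identities. For unitality, one computes $\alpha \boxplus 1_0 = (s_R)^{\circ 1}(\alpha) \cdot (s_L)^{\circ(n+1)}(1_0)$; here $s_L$ is a group homomorphism sending the identity to the identity, and one must check that $(s_R)^{\circ 1}$ restricted appropriately, together with the compatibility with $\pi$, forces this to equal $\alpha$ — this should follow from the fact that $s_L, s_R$ are contraction homotopies covering the standard contractions on $S_*$, which on the level of $S_*$ do recover $\sigma$ (adding a point at one end to $1_0 \in S_0$ and then using that $s_R$ on $S_n$ is the identity-on-values insertion). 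For associativity, both sides expand via the defining formula into products of iterates of $s_L$ and $s_R$ applied to $\alpha, \beta, \gamma$; using the simplicial identities relating $s_L$ and $s_R$ (they commute appropriately as extreme degeneracies) together with the monoidality hypothesis that the relevant $s_R$- and $s_L$-images commute, both sides reduce to the same product $(s_R)^{\circ(m+k+2)}(\alpha) \cdot (s_R)^{\circ(k+1)}(s_L)^{\circ(n+1)}(\beta) \cdot (s_L)^{\circ(n+m+2)}(\gamma)$. This is the routine but slightly fiddly computation, of exactly the flavour deferred to Appendix~\ref{ap:routine_verifications}; I would either do it there or cite the analogous computation for the permutation case in \cite{Fresse-book}, since $\pi \colon G_* \to S_*$ intertwines all the structure and the kernel is a genuine simplicial group where the contractions behave strictly.

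For part (2), I would invoke that the nerve functor $N \colon \Cat \to \sSet$ and geometric realization $|-| \colon \sSet \to \Top$ are both (lax, in fact strong on the relevant objects) monoidal functors, so that $\B = |N(-)|$ sends the strict monoidal category $(\bigsqcup_n G_n, \boxplus)$ to a monoid object in $\Top$, i.e.\ a strict topological monoid, with multiplication $\B\boxplus \colon \B G_n \times \B G_m \to \B G_{n+m+1}$ assembling into $\bigl(\bigsqcup_n \B G_n\bigr) \times \bigl(\bigsqcup_m \B G_m\bigr) \to \bigsqcup_k \B G_k$ and unit the point $\B G_0 \ni * $ coming from $0$. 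The only subtlety is that $|-|$ preserves products only up to the Milnor-type homeomorphism $|X \times Y| \cong |X| \times |Y|$, valid since everything in sight is built from simplicial sets of finite type (or one works with compactly generated spaces throughout, as is standard in this context); granting that, strictness of the monoid structure on $\bigsqcup_n G_n$ passes to strictness of the topological monoid $\bigsqcup_n \B G_n$. The main obstacle is purely organizational: keeping the indices in the iterated-degeneracy expressions straight when verifying associativity, which is why the cleanest route is to reduce everything to the structure on $S_*$ via the structural projection $\pi$ plus the strict behaviour on the simplicial-group kernel $P_*$.
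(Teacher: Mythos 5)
Your overall strategy --- a direct verification that $\boxplus$ makes $\bigsqcup_{n\ge 0} G_n$ a monoid object in $\Cat$, followed by applying the classifying space functor (which preserves products on compactly generated spaces) --- is reasonable, and two of your three verifications are correct: functoriality of $\boxplus$ is indeed exactly the interchange law, i.e.\ the hypothesis that $\boxplus$ is a group homomorphism, and strict associativity does reduce, via the homomorphism property of $s_L$ and $s_R$, to the commutation $s_L s_R = s_R s_L$ of the two extreme degeneracies, which is an augmented simplicial identity. For comparison, the paper offers no proof at all, declaring the lemma to be ``just unpacking the definition,'' so you are supplying more detail than the source.

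There is, however, a genuine error in your unit verification. On objects the tensor product is $n \otimes m = n+m+1$ (since $\boxplus$ lands in $G_{n+m+1}$), so the object $0$ cannot be a unit: $0 \otimes n = n+1 \neq n$. Concretely, $\alpha \boxplus 1_0 = (s_R)^{\circ 1}(\alpha)\cdot(s_L)^{\circ(n+1)}(1_0) = s_R(\alpha)$ lies in $G_{n+1}$, a different component of the disjoint union, and so cannot equal $\alpha$; already for $S_*$ it is $\sigma$ with a fixed point appended, not $\sigma$. The identity $1_0 \boxplus \alpha = \alpha$ that you propose to ``check'' is simply false, and no compatibility with $\pi$ or with the contraction on $S_*$ will rescue it. What is literally true of $\bigsqcup_{n\ge 0} G_n$ as written is that it is a strictly associative \emph{nonunital} monoidal category (a semigroup object in $\Cat$); to obtain a genuine monoidal category, and hence a strict topological monoid after applying $\B$, one must adjoin a formal unit object --- a disjoint basepoint playing the role of ``$G_{-1}$,'' i.e.\ the arity-zero component $\OO(0)=\ast$ in the unshifted operadic indexing $\OO(n)=G_{n-1}$. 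This imprecision is arguably already present in the paper's statement of the lemma (and in its later identification of $T_G(\ast)$ with $\bigsqcup_{n\ge 0}\B G_n$, which likewise drops the $\OO(0)$ component), but your proof as written asserts and relies on a false identity rather than flagging and repairing the issue.
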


Before proceeding to the examples, we introduce this paper's last enhancement of the notion of a crossed simplicial group.

\begin{definition}
\label{def:operadic_csg}
A monoidal crossed simplicial group $G_*$ is called \textit{operadic} if the following identity holds for all $\alpha \in G_n, \beta \in G_m$ and for all $0 \leq i \leq n$:
    $$(1_i \boxplus \beta \boxplus 1_{n - i})\cdot s_i^m(\alpha) = s_{i}^m(\alpha)\cdot (1_{\alpha^{-1}(i)} \boxplus \beta \boxplus 1_{n - \alpha^{-1}(i)}).$$
\end{definition}

\subsection{Examples}\label{sec:examples}

The primary class of examples considered in this paper is the family of braid-like groups in the broad sense.  The most prominent and motivating example is the Artin braid group \(B_n\); for an introduction in braid theory we refer the reader to ~\cite{gonzalezmeneses2010basicresultsbraidgroups}. In this paper, we adopt the following convention: the group $Br_n$ denotes the Artin braid group on $n$ strands, and we set \(B_n := Br_{n+1}\).

We recall the conventions and describe how the structural maps introduced above are realised for the braid groups.  Throughout this subsection we regard an \(n\)-strand braid as indexed by \([n-1]\),
so that the top and bottom endpoints of the braid are labelled by the same ordered
set.  
Multiplication in \(B_n\) is given by vertical concatenation (see Fig.~\ref{fig:basics}). The natural left action
\[
B_n\curvearrowright [n]
\]
is given by this permutation action: \(\beta\in B_{n}\) sends the label \(i\in[n]\) to the index of the strand that ends in the \(i\)-th bottom position. This defines the canonical projection (which gives the structural projection)
\[
\pi_n\colon B_n\twoheadrightarrow S_n,
\]
which forgets the crossing data and records only the permutation of endpoints induced by a braid. 

\begin{figure}[H]
  \centering
  \includegraphics[width=0.8\textwidth]{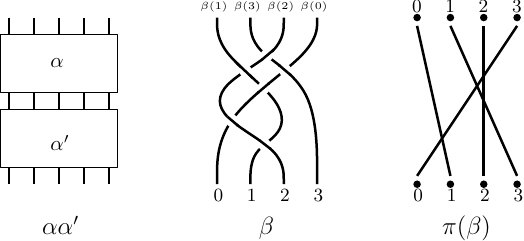}
  \caption{Multiplication by vertical concatenation and the action on \([n]\).}
  \label{fig:basics}
\end{figure}

% \paragraph{Simplicial structure (faces and degeneracies).}
The simplicial maps \(s_i\colon B_{n}\to B_{n+1}\) and \(d_i\colon B_{n+1}\to B_n\)
are given by duplicating and deleting the \(i\)-th strand, respectively
(see Fig.~\ref{fig:degeneracy_face}).  With these conventions, the family
\(\{B_{n}\}_{n\ge 0}\) forms a crossed simplicial group (see~\cite{f0e21457-c8c3-31f9-bd48-762cf5f699ae}).
\begin{figure}[H]
  \centering
  \includegraphics[width=0.8\textwidth]{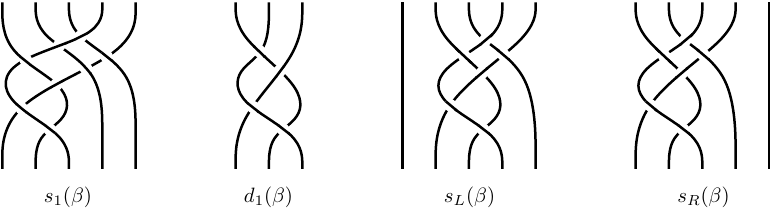}
  \caption{The degeneracy \(s_i\) (cabling the \(i\)-th strand), the face \(d_i\) (deleting the \(i\)-th strand), and two contracting homotopies $s_L, s_R$ applied to the braid $\beta$ from Fig.~\ref{fig:basics}.}
  \label{fig:degeneracy_face}
\end{figure}

% \paragraph{Ambi-contracting maps.}
This crossed simplicial group is ambi contractible: there exist two distinguished degeneracy-type maps
\[
s_L,s_R\colon B_{n}\longrightarrow B_{n+1}
\]
which add a new trivial strand at the left-most (resp.\ right-most) position (see Fig.~\ref{fig:degeneracy_face}).  

% Concretely, \(s_L\) inserts a new left-most strand that is disjoint from all crossings of the original braid, while \(s_R\) inserts a new right-most strand in the same manner.  These maps will be used below when verifying the operadic structure.

% \paragraph{Monoidicity (horizontal juxtaposition).}
It is easy to see that the ``monoidal'' axiom (Definition~\ref{def:monoidal_p_csg}) is satisfied and the product of two braids is given by the juxtaposition operation (see Fig.~\ref{fig:boxplus}).

\begin{figure}[H]
  \centering
  \includegraphics[width=0.8\textwidth]{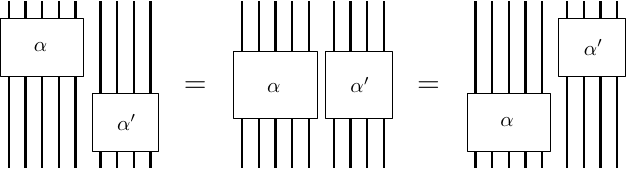}
  \caption{Horizontal juxtaposition \(\alpha\boxplus\alpha'\) of two braids.}
  \label{fig:boxplus}
\end{figure}

The fact that, under these structural maps, the crossed simplicial braid group is
operadic follows from a straightforward identity in the braid group.  This can be
seen directly from the diagrammatic representation (Fig.~\ref{fig:identbraid}),
where the compatibility of the structural maps is visually evident.  A rigorous verification may be obtained by appealing to the standard braid group
fact that \(\sigma_i^{\,b} = \sigma_j\) if and only if \(b^{-1}(i) = j\) and a
ribbon can be inserted between the \(i\)-th and \((i+1)\)-st strands of \(b\); see, for example,~~\cite{dehornoy2013foundationsgarsidetheory}.

% \begin{figure}[htbp]
%   \centering
%   \hspace*{\fill}
%   {\includegraphics[width=0.8\textwidth]{pic_actual/perenos (1).pdf}}
%   \hfill
%   {\includegraphics[width=0.8\textwidth]{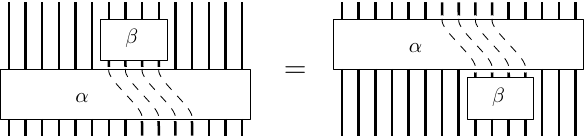}}
%   \hspace*{\fill}
%   \caption{Relations in \(B_n\) illustrating the operadicity axiom.}
%   \label{fig:identbraid}
% \end{figure}

% % Side-by-side figures
\begin{figure}[H]
\centering
\begin{minipage}{.5\textwidth}
 \vspace{0pt}
  \centering
  \includegraphics[width=.9\linewidth]{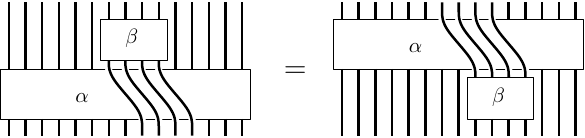}
  % \captionof{figure}{A figure}
\end{minipage}%
\begin{minipage}{.5\textwidth}
\vspace{0pt}
  \centering
  \includegraphics[width=.9\linewidth]{pic_actual/perenos-snizu.pdf}
  % \captionof{figure}{Another figure}
\end{minipage}
\caption{Relations in \(B_n\) illustrating the operadicity axiom.}
\label{fig:identbraid}
\end{figure}

% Bibliography entry for the cited reference should appear in the paper's bibliography.
% \bibliographystyle{...}
% \bibliography{...}

\subsubsection{Other braid-like groups}

Another class of examples is provided by the \emph{homotopy braid groups} \(hBr_n\) 
(see, e.g.,~~\cite{bardakov2021homotopybraids} for a formal treatment).  The structural 
decomposition is
\[
1 \longrightarrow hP_{n+1} \longrightarrow hBr_{n+1} \longrightarrow S_n \longrightarrow 1,
\]
where \(hP_n\) denotes the pure homotopy braid group.  The usual doubling and deletion 
of strands endows \(hB_n\) with a crossed simplicial group structure.  Since \(\{hBr_{n+1}\}_{n \geq 0}\) 
is a quotient of the standard braid crossed simplicial group, all the structural axioms 
(ambi contractible, monoidal, operadic) are inherited.

A further example is the \emph{welded braid group} \(WBr_n\) (see, e.g., ~\cite{bellingeri2020noterepresentationsweldedbraid}), with structural decomposition
\[
1 \longrightarrow WP_{n+1} \longrightarrow WBr_{n+1} \longrightarrow S_n \longrightarrow 1,
\]
where \(WP_n\) is the pure welded braid group.  Ambi-contractibility and monoidicity are immediate, and operadicity follows by an argument analogous to that for the Artin braid groups, using a similar criterion for conjugation of generators (see ~\cite{Bellingeri31122004}).

% Using the standard ``direct sum group'' technique (see ~\cite{WAGONER1972349}), we obtain the following weak equivalences for the examples above:
% \[
% \mathbb{Z} \times (\B hB_\infty)^{+} \simeq T_{hB}(*)^{\mathrm{grp}}
% \quad \text{and} \quad
% \mathbb{Z} \times (\B WB_\infty)^{+} \simeq T_{WB}(*)^{\mathrm{grp}},
% \]
% where $hB_\infty$ denotes the colimit of the sequence $\{hB_n\}$ under the maps $s_R$, and $WB_\infty$ is defined analogously.

% \begin{question}
%     What are the homotopy types of $\B hB_\infty^+$  and $\B WB_\infty^+$?
% \end{question}
\section{Constructing operads}\label{sec:constructing_operads}
 Our main goal is to define operads using crossed simplicial groups. The simplest case is that of an operad in sets. To make the subsequent constructions self-contained, we briefly recall the definition of an operad and introduce a shifted variant that will be more convenient for our purposes.

\subsubsection{Operads}
\begin{definition}[see ~\cite{Loday-algebraic-op}]\label{definition_operad}
    Let $\mm$ be a monoidal category. A (non-symmetric) \textit{operad} in $\mm$ is a sequence of objects $\OO(0), \OO(1), \OO(2), \ldots$ with the following additional data: for each $n \geq 1, m \geq 0$ there are $n$ operations $\circ_1, \ldots, \circ_n$ 
        $$\OO(n) \times \OO(m) \to \OO(n + m - 1).$$
    And they are subject to the following relations and constraints:
    
    \begin{enumerate}
        \item There is only one element $* \in \OO(0)$;
        \item there is an element $\id \in \OO(1)$, such that     
        $$\id \circ_1 \nu = \nu, \mu \circ_i \id = \mu;$$
        \item for $\lambda \in \OO(l), \mu \in \OO(m), \nu \in \OO(n)$         one has $$(\lambda \circ_i \mu) \circ_{i + j - 1} \nu = \lambda \circ_i(\mu \circ_j \nu)$$ for $1 \leq i \leq l, \; 1 \leq j \leq m$;
        \item for $\lambda \in \OO(l), \mu \in \OO(m), \nu \in \OO(n)$         one has $$(\lambda \circ_i \mu) \circ_{k - 1 + m} \nu  = (\lambda \circ_k \nu ) \circ_i \mu$$
        for $1 \leq i < k \leq l$.
    \end{enumerate}
\end{definition}

In what comes next, using a crossed simplicial group $G_*$ we will build an operad with $\OO(n) = G_{n-1}$ (or a groupoid built using $G_{n-1}$). To maintain smooth notation we propose an (equivalent) notion of a shifted operad.

\begin{definition}
    Let $\mm$ be a monoidal category. A (non-symmetric) \textit{shifted operad} in $\mm$ is a semi-simplicial object $\uu(*) \in \mm^{\Deltainj^\mathrm{op}}$ together with $n+1$ morphisms $\circ_0, \ldots, \circ_n$ acting $$\circ_i \colon  \uu(n) \times \uu(m) \to \uu(n+m) \text{ for } i = 0, \ldots, n.$$ Subject to the following relations and constraints: 
    \begin{enumerate}
        \item There is an element $\id \in \uu(0)$ such that 
         $$\id \circ_0 \nu = \nu,\quad  \mu \circ_i \id = \mu;$$
        \item for $\lambda \in \uu(l), \mu \in \uu(m), \nu \in \uu(n)$ one has 
        $$(\lambda \circ_i \mu) \circ_{i + j} \nu = \lambda \circ_i(\mu \circ_j \nu)$$ for $0 \leq i \leq l, \; 0 \leq j \leq m$;

        \item for $\lambda \in \uu(l), \mu \in \uu(m), \nu \in \uu(n)$ one has $$(\lambda \circ_i \mu) \circ_{k + m} \nu  = (\lambda \circ_k \nu ) \circ_i \mu$$
        for $0 \leq i < k \leq l$;

        \item for $\lambda \in \uu(l), \mu \in \uu(m)$ one has 
  $$d_{i + j}(\lambda \circ_i \mu) = \lambda \circ_i d_j(\mu )$$
  for $0 \leq i \leq l, 0 \leq j \leq m$;
  \item for $\lambda \in \uu(l), \mu \in \uu(m), \nu \in \uu(n)$ and $0 \leq i < k \leq l$ one has 
  $$d_i(\lambda \circ_k \nu) = d_i(\lambda) \circ_{k} \nu,$$
  $$d_{k + m}(\lambda \circ_i \mu)  = d_k(\lambda) \circ_i \mu.$$
\end{enumerate}   
\end{definition}
The next lemma is straightforward, so we omit its proof.
\begin{lemma}
    The transformation $$\uu(n) \mapsto \OO(n + 1), \quad \circ_i \mapsto \circ_{i + 1}, \quad  d_i \mapsto (-) \circ_{i + 1} *$$ provides an isomorphism between the categories of operads and shifted operads.
\end{lemma}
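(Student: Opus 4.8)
The plan is to exhibit the functor $\uu \mapsto \OO$ explicitly, check it is well-defined on objects and morphisms, produce an inverse, and verify the two composites are identities; the content is entirely bookkeeping about index shifts.

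First I would spell out the correspondence on the underlying sequences. Given a shifted operad $\uu(*)$, set $\OO(n) := \uu(n-1)$ for $n \ge 1$, and declare $\OO(0)$ to be a one-point object (this is forced: the only datum available is the empty one, and axiom (1) of Definition~\ref{definition_operad} demands it). The partial compositions $\circ_1,\dots,\circ_n \colon \OO(n)\times\OO(m)\to\OO(n+m-1)$ are obtained from $\circ_0,\dots,\circ_{n-1} \colon \uu(n-1)\times\uu(m-1)\to\uu(n+m-2)$ by the reindexing $\circ_i^{\OO} := \circ_{i-1}^{\uu}$, so that an index $i$ running in $\{1,\dots,n\}$ for $\OO$ corresponds to an index in $\{0,\dots,n-1\}$ for $\uu$. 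The unit $\id\in\OO(1)=\uu(0)$ is the unit of $\uu$. Conversely, from an operad $\OO$ one recovers $\uu(n):=\OO(n+1)$, $\circ_i^{\uu}:=\circ_{i+1}^{\OO}$, and the semisimplicial face maps $d_i\colon\uu(n)=\OO(n+1)\to\OO(n)=\uu(n-1)$ by $d_i(\lambda):=\lambda\circ_{i+1}^{\OO}*$, where $*$ is the unique element of $\OO(0)$; one must note that $\OO(n+1)\times\OO(0)\cong\OO(n+1)$ since $\OO(0)$ is terminal, so this is literally a map $\OO(n+1)\to\OO(n)$.

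Next I would check that each assignment lands in the correct category. For the direction $\uu\rightsquigarrow\OO$: the unit axioms (2) of Definition~\ref{definition_operad} are the shifted unit axioms (1) under the reindexing; the two associativity/equivariance relations (3),(4) of Definition~\ref{definition_operad} become relations (2),(3) of the shifted operad after substituting $i\mapsto i-1$, $j\mapsto j-1$, $k\mapsto k-1$ (the arithmetic $i+j-1 \leftrightarrow i+j$ and $k-1+m \leftrightarrow k+m$ matches on the nose). For the direction $\OO\rightsquigarrow\uu$: relations (2),(3) of the shifted operad are immediate from (3),(4) of the operad; the new face axioms (4),(5) must be \emph{derived}, and this is the one place where there is something to prove rather than merely reindex. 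Axiom (4), $d_{i+j}(\lambda\circ_i\mu)=\lambda\circ_i d_j(\mu)$, unwinds to $(\lambda\circ_{i+1}\mu)\circ_{i+j+1}* = \lambda\circ_{i+1}(\mu\circ_{j+1}*)$, which is exactly operad associativity (3) with the outputs $*$; similarly the two halves of axiom (5) are instances of operad axiom (4) (the "commuting compositions" relation) with one input set to $*$, once in each of the two possible positional orders. That the $d_i$ satisfy the semisimplicial identities $d_i d_j = d_{j-1} d_i$ for $i<j$ then follows formally from axioms (4) and (5) of the shifted structure, so $\uu$ is a genuine semisimplicial object.

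Finally I would verify that the two constructions are mutually inverse: starting from $\uu$, forming $\OO$, and returning gives back $\uu(n)=\OO(n+1)=\uu(n)$ with $\circ_i$ unchanged, and the recovered face maps $d_i(\lambda)=\lambda\circ_{i+1}^{\OO}* = \lambda\circ_i^{\uu}\id$, which equals $d_i(\lambda)$ by the shifted unit axiom $\mu\circ_i\id=\mu$ applied — wait, more precisely by the reverse reading, one checks $\lambda\circ_i^{\uu}(d\text{-direction})$; in any case the identity $\mu\circ_i\id=\mu$ forces $\lambda\circ_{i+1}^{\OO}*$ to reproduce the original $d_i$ because $*$ corresponds to $\id\in\uu(0)$ under no shift — here I would simply observe that the face maps of a shifted operad are determined by the compositions via $d_j(\mu)=\mu\circ_j\id$ is \emph{not} a shifted-operad axiom, so instead the cleanest route is: the face maps are extra data, and the round trip $\uu\to\OO\to\uu$ recovers them since $d_i$ was used to define $(-)\circ_{i+1}*$ and $*=\id$, whence $(-)\circ_{i+1}^{\OO}* = (-)\circ_i^{\uu}$ applied to the $\uu(0)$-unit, which by shifted axiom (1) ($\mu\circ_i\id=\mu$) is \emph{not} $d_i$ — so one must instead note that the lemma's transformation is only claimed to be an isomorphism of \emph{categories}, i.e.\ a bijection on objects and on morphisms, and the object-level round trips are identities on the underlying sequences and compositions, with the face data on the $\uu$-side being redundant because it is forced: in a shifted operad, axiom (4) with $j=0$ would need a $d_0\colon\uu(m)\to\uu(m-1)$... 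Rather than chase this, the clean statement is that a shifted operad is equivalent data to its compositions together with $*:=\id$, and $d_i$ is recoverable precisely as $(-)\circ_{i+1}*$ \emph{on the operad side}; since on the shifted side we took $\OO(n+1):=\uu(n)$ and transported everything, both round trips are the identity by construction. The morphism-level statement (a map of operads is the same as a map of shifted operads) is then immediate since a map must preserve $\OO(0)$, the compositions, the unit, and hence the derived/defining face maps. I expect the main obstacle to be none of substance — it is purely the care needed to confirm that the face axioms (4),(5) of the shifted operad are exactly the $*$-specializations of the operad associativity and commutativity axioms, and that the semisimplicial identities drop out formally; this is why the lemma is stated without proof, and I would likewise dispatch it in a sentence or two once the dictionary above is written down.
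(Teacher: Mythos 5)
Your dictionary is the right one, and it supplies exactly the routine verification that the paper omits (the paper states the lemma without proof); the essential content is, as you identify, that the shifted face axioms (4)--(5) are precisely the specializations of the operad associativity and commuting-compositions axioms to a nullary input $*$, and that everything else is an index shift. Two small points to clean up. First, in the forward direction $\uu \rightsquigarrow \OO$ you should state explicitly that the missing compositions $\OO(n)\times\OO(0)\to\OO(n-1)$ are \emph{defined} by $\lambda\circ_{i+1}*:=d_i(\lambda)$; once that is on the page, your closing paragraph's worry dissolves --- the face maps are genuinely extra data on the shifted side (you are right that $\mu\circ_j\id=\mu$ makes $(-)\circ_j\id$ useless for recovering $d_j$), they are not redundant, and both round trips are identities by construction. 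Second, the semisimplicial identity $d_id_j=d_{j-1}d_i$ for the $d_i$ defined on the operad side does not follow from shifted axioms (4)--(5) as you wrote (those still involve a composition with a nonempty input); it is the instance of operad axiom (4) with \emph{both} inputs equal to $*$, namely $(\lambda\circ_{i+1}*)\circ_{j}*=(\lambda\circ_{j+1}*)\circ_{i+1}*$ for $i<j$.
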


 % The simplest instantation of this is the fact that braid groups $Br_n$ and symmetric groups $\Sigma_n$ form operads in $\Set$ (see, for example, ~\cite{Fresse-book}).
 
 Fix a monoidal (ambi contractible, symmetric) crossed simplicial group $G_*$. 

\begin{theorem}\label{thm:set_operad}
    Define $n + 1\,$ $\Set$ functions $G_n \times G_m \to G_{n + m}$ by the formula
    $$\circ_i \colon  (\alpha, \beta) \mapsto (1_i \boxplus \beta \boxplus 1_{n - i}) \cdot s_i^m(\alpha).$$

    Under these operations and $d_i$'th from the simplicial structure, the sets $G_*$ form a {\rm(}non-symmetric\!~{\rm)} operad in $\Set$.
\end{theorem}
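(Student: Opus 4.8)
The plan is to verify directly the five axioms of a shifted operad for the sequence $\uu(n) = G_n$ with $\circ_i$ as defined and $d_i$ the simplicial face maps. Everything reduces to the crossed-group identities together with the three structural axioms: ambi-contractibility (so $s_L, s_R$ are group homomorphisms compatible with $\pi$), monoidality (so $(s_R)^{\circ(m+1)}(\alpha)$ and $(s_L)^{\circ(n+1)}(\beta)$ commute, giving $\boxplus$ a group-homomorphism structure), and the operadicity identity $(1_i \boxplus \beta \boxplus 1_{n-i})\cdot s_i^m(\alpha) = s_i^m(\alpha)\cdot(1_{\alpha^{-1}(i)} \boxplus \beta \boxplus 1_{n-\alpha^{-1}(i)})$. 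A useful first reduction is to rewrite $1_i \boxplus \beta \boxplus 1_{n-i}$ as $s_0^i\bigl((s_R)^{\circ(n-i)}\beta\bigr)$ or similar — i.e. to express the "padding" operators as iterated degeneracies of $\beta$ — so that all terms become products of iterated degeneracy maps applied to $\alpha$ and $\beta$, at which point the crossed simplicial identities among the $s_i$ can be applied mechanically.

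First I would dispense with the unit axiom (1): $\id = 1_0 \in G_0$, and $\id \circ_0 \nu = (1_0 \boxplus \nu)\cdot s_0^m(1_0) = (s_L)^{\circ 0}(\nu)\cdot 1 = \nu$, while $\mu \circ_i \id = (1_i \boxplus 1_0 \boxplus 1_{n-i})\cdot s_i^0(\mu) = 1\cdot \mu = \mu$; these are immediate from the definitions of $\boxplus$ and $1_n$. Next I would check the face-compatibility axioms (4) and (5), $d_{i+j}(\lambda \circ_i \mu) = \lambda \circ_i d_j(\mu)$ and the two identities relating $d_i, d_{k+m}$ with $\circ_k, \circ_i$: these follow from the face identities of Lemma~\ref{action_lemma_fied_lod} (specifically $d_a(gh) = d_a(g)d_{g^{-1}(a)}(h)$) combined with the simplicial relations $d_a s_b = s_{b-1}d_a$ etc., applied to the factored form $\circ_i(\alpha,\beta) = (1_i\boxplus\beta\boxplus 1_{n-i})\cdot s_i^m(\alpha)$; one tracks how a face map deleting a vertex in the "$\mu$-block" versus the "$\lambda$-block" interacts with the padding and with $s_i^m$. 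This is bookkeeping but should go through cleanly once the index arithmetic is set up.

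The heart of the proof is the two associativity-type axioms (2) and (3). For the "nested" associativity (2), $(\lambda \circ_i \mu)\circ_{i+j}\nu = \lambda \circ_i(\mu\circ_j\nu)$, I would expand both sides into products of padded degeneracies of $\lambda, \mu, \nu$; the key maneuver is to move the $\nu$-padding past $s_i^m(\lambda)$ using the \emph{operadicity axiom} (this is exactly what it is designed for — it is the statement that conjugating a padding block by $s_i^m(\alpha)$ shifts its position by $\alpha^{-1}$), and then to reorganize the remaining degeneracies of $\lambda$ using the cosimplicial identity $s_a s_b = s_{b+1} s_a$ for $a \le b$. For the "parallel" or horizontal associativity (3), $(\lambda\circ_i\mu)\circ_{k+m}\nu = (\lambda\circ_k\nu)\circ_i\mu$ with $i < k$, the two padding blocks (for $\mu$ and for $\nu$) land in disjoint ranges of indices, so after using operadicity to normalize positions one invokes monoidality to commute the two disjoint blocks past each other. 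I expect axiom (3), together with getting all the index shifts in axiom (2) exactly right, to be the main obstacle: the padding operators $1_a \boxplus (-) \boxplus 1_b$ interact with $s_i^m$ in a way that requires careful tracking of how $s_i^m(\alpha)$ permutes indices (via the underlying permutation $\pi(\alpha) \in S_n$), and it is precisely here that the operadicity axiom must be invoked with the correct argument $\alpha^{-1}(i)$ rather than $i$. Given the paper's own remark that these are "somewhat tedious computational verifications," I would present the unit and face axioms briefly in the main text and relegate the full index-chasing for (2) and (3) to Appendix~\ref{ap:routine_verifications}.
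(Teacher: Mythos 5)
Your overall strategy --- direct verification of the shifted-operad axioms, with the unit handled via $1 \in G_0$ and the face-compatibility axioms via the crossed face identity $d_a(gh)=d_a(g)\,d_{g^{-1}(a)}(h)$ --- matches the paper's proof. The gap is in your plan for the two associativity axioms. You make the operadicity axiom (Definition~\ref{def:operadic_csg}) your ``key maneuver,'' but Theorem~\ref{thm:set_operad} assumes only that $G_*$ is \emph{monoidal}; operadicity is not a hypothesis, so you may not invoke it, and an argument that genuinely required it would prove a strictly weaker statement. In the paper, operadicity enters only later, to establish the multiplicativity identity~\eqref{operadic-mult} that makes the $\circ_i$ into functors of groupoids in Theorem~\ref{thm:gpd_operad}; it plays no role in the $\Set$-level operad.

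Moreover, the maneuver you describe does not correspond to a step that actually arises. Expanding the left side of the nested associativity gives $(1_{i+j}\boxplus\nu\boxplus 1_{l+m-(i+j)})\cdot s_{i+j}^{n}\bigl(X\cdot s_i^m(\lambda)\bigr)$ with $X=1_i\boxplus\mu\boxplus 1_{l-i}$; the $\nu$-padding never sits adjacent to $s_i^m(\lambda)$, and nothing needs to be commuted past it. The tool that does the work is the crossed \emph{degeneracy} identity $s_a(gh)=s_a(g)\,s_{g^{-1}(a)}(h)$ applied with $g=X$: it yields $s_{i+j}^n(X)=1_i\boxplus s_j^n\mu\boxplus 1_{l-i}$, and since $X^{-1}(i+j)$ lies in $\{i,\dots,i+m\}$ the remaining factor collapses to $s_i^{m+n}(\lambda)$ by the ordinary simplicial identities; after splitting the right-hand side using monoidality of $\boxplus$, the two sides are literally identical, with no commutation step at all. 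For the parallel axiom your instinct that monoidality commutes the two disjoint padding blocks is exactly right and is what the paper does, but the preliminary ``normalization by operadicity'' is again neither available nor needed --- the same crossed degeneracy identity (here $X$ fixes $k+m$, so no index shift occurs) together with $s_{k+m}^n s_i^m = s_i^m s_k^n$ suffices. Replace the appeal to Definition~\ref{def:operadic_csg} by the crossed degeneracy identity and your outline becomes the paper's proof.
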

The proof of this theorem is fully computational and is given in the appendix. 

Following ~\cite{Fied-preprint}, we define $G_*$-like operads in a monoidal category $\mm$. While the definition below is formulated for shifted operads, it admits a straightforward translation to the language of ordinary operads. In particular, for the crossed simplicial group $S_*$, our definition recovers the classical notion of a symmetric operad, whereas for $B_*$ it coincides with Fiedorowicz's notion of a braided operad.

% To repeat the ideas from ~\cite{Fied-preprint} we define $G_*$-like operads in a monoidal category $\mm$. The definition below is given for shifted operads, however it can naturally be translated to the language of usual operads. We also want to note that for the crossed simplicial group $S_*$ our definition boils down to the usual definition of a symmetric operad, and for the crossed simplicial group $B_*$ it boils down to the Fiedorowicz's notion of a braided operad.

\begin{definition}
    A $G_*$-like shifted operad in $\mm$ is a non-symmetric shifted operad $\uu(n)$ such that each object $\uu(n)$ is equipped with a right action of $G_n$ subject to the following equivariance conditions. For each $\mu \in \uu(m), \nu \in \uu(n)$, $\beta \in G_n$ and $i \leq m$: 
    \begin{enumerate}
        \item $\mu \circ_i \nu^\beta = (\mu \circ_i \nu)^{\beta'}$, where $\beta' = 1_{i} \boxplus \beta \boxplus 1_{m - i}$;
        \item $\mu^\beta \circ_i \nu = (\mu \circ_{\sigma(i)}\nu)^{\beta''}$, where $\beta'' = d_i^n \beta$.
    \end{enumerate}
    % \begin{enumerate}
    %     \item $$\gamma(cg; d_1, \ldots, d_k) = \gamma(c;d_{g^{-1}(1)}, \ldots, d_{g^{-1}(k)})\cdot g(j_1, \ldots, j_k),$$ where $d_i \in \OO(j_i)$ for $i = 1, \ldots, k$ and 
    %     $$g(j_1, \ldots, j_k) =s_{1 + j_!+\ldots + j_{k-1}}^{j_k}\ldots s_{1 + j_1}^{j_2}s_0^{j_1}(g),$$
    %     \item $$\gamma(c;d_1\tau_1, \ldots, d_k\tau_k) = \gamma(c;d_1, \ldots, d_k)\cdot (\tau_1 \boxplus \ldots \boxplus \tau_k)$$
    % \end{enumerate}
\end{definition}

As in the braided case (see~\cite{Fied-preprint}), we define a $G_\infty$-operad to be a $G_*$-like operad for which the spaces $\OO(n)$ are contractible and the actions of $G_n$ are free. 

To proceed with Fiedorowicz's arguments, it is necessary to provide a specific example of a $G_*$-like operad with $\OO(n) = EG_n$. Remarkably, the groupoids introduced in the first paragraph of this paper serve this purpose.

Before we proceed to the construction of the operad in groupoids, we observe that the operadicity axiom (Definition~\ref{def:operadic_csg}) imposed on a symmetric ambi contractible crossed simplicial group $G_*$ implies the following relation in the $\Set$-operad for all admissible inputs. 
\begin{equation}\label{operadic-mult}
  (\alpha \circ_i \beta) \cdot (\alpha' \circ_{\alpha^{-1}(i)} \beta') = \alpha\alpha' \circ_i \beta\beta'.  
\end{equation}

\begin{proof}[Proof of the identity \ref{operadic-mult}]
    Consider $\alpha, \alpha' \in G_n$, $\beta, \beta' \in G_m$. Unfold the left-hand-side and use the operadicity axiom on the second and third factors: 
    $$(1_i \boxplus \beta \boxplus 1_{n-i}) \cdot s_i^m(\alpha) \cdot (1_{\alpha^{-1}(i)} \boxplus \beta' \boxplus 1_{n - \alpha^{-1}(i)})\cdot s_{\alpha^{-1}(i)}^m(\alpha') = $$
    $$ = (1_i \boxplus \beta \boxplus 1_{n-i}) \cdot (1_i \boxplus \beta' \boxplus 1_{n-i}) \cdot s_i^m(\alpha)s_{\alpha^{-1}(i)}^m(\alpha').$$
\end{proof}

\begin{theorem}\label{thm:gpd_operad}
    Let $G_*$ be an operadic crossed simplicial group and let $\Gamma_*$ be the sequence of its action groupoids. Define $n + 1$ functions $\circ_i \colon \Gamma_n \times \Gamma_m \to \Gamma_{n + m}$ as 
    $$[\sigma, f] \circ_i [\tau, g] = [\sigma \circ_i \tau, (f^{-1} \circ_{\sigma^{-1}(i)} g^{-1})^{-1}],$$
    where $\circ_{\sigma^{-1}(i)}$ is the operation on $G_*$ given by Theorem~\ref{thm:set_operad}.
    Then $\circ_i$ are functors of groupoids. Moreover, the sequence of groupoids $\{\Gamma_n\}$ with $\circ_i$'s and simplicial $d_j$'s form a shifted operad in $\Gpd$.
\end{theorem}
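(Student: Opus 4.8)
The plan is to verify, in order, (a) that each $\circ_i$ is a well-defined map on hom-sets (i.e.\ the domains and codomains match up), (b) that it is functorial (respects composition), (c) the four structural axioms of a shifted operad — the two associativity-type identities, the unit identity, and the compatibility with the face maps $d_j$. The crucial reduction is the multiplicativity identity~\eqref{operadic-mult}, which is precisely what makes the formula $[\sigma,f]\circ_i[\tau,g]=[\sigma\circ_i\tau,(f^{-1}\circ_{\sigma^{-1}(i)}g^{-1})^{-1}]$ behave well; this is the groupoid-level analogue of the double-inversion trick already used in~\eqref{degeneracy_groupoid} and~\eqref{face_groupoid} to handle the fact that $s_i$ and the operad operations are not group homomorphisms on the $G_n$.

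First I would check well-definedness on morphisms. Given $[\sigma,f]\colon\sigma\to\sigma f^{-1}$ in $\Gamma_n$ and $[\tau,g]\colon\tau\to\tau g^{-1}$ in $\Gamma_m$, the domain of the output is $\sigma\circ_i\tau$ by construction; for the codomain one computes
$(\sigma\circ_i\tau)\cdot\bigl((f^{-1}\circ_{\sigma^{-1}(i)}g^{-1})^{-1}\bigr)^{-1}=(\sigma\circ_i\tau)(f^{-1}\circ_{\sigma^{-1}(i)}g^{-1})$,
and applying~\eqref{operadic-mult} with $\alpha=\sigma$, $\alpha'=f^{-1}$, $\beta=\tau$, $\beta'=g^{-1}$ rewrites this as $\sigma f^{-1}\circ_i\tau g^{-1}$, which is exactly the codomain of $[\sigma f^{-1},\cdot]\circ_i[\tau g^{-1},\cdot]$ — so the output is a morphism with the expected source and target. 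One also checks the operation descends to $G_n/P_n$-classes: this follows because $P_*$ is a simplicial \emph{sub}group on which the structure maps restrict compatibly, so replacing $f$ by $pf$ with $p\in P_n$ only changes the second coordinate by an element of $P_{n+m}$.

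Next, functoriality: I would expand $\bigl([\sigma f^{-1},g_2]\cdot[\sigma,f]\bigr)\circ_i[\cdots]$ and compare with the composite of the two images, reducing the required identity — after stripping the outer inversions — to an instance of~\eqref{operadic-mult} together with the already-established $\Set$-operad axioms from Theorem~\ref{thm:set_operad}; this is the same shape of argument as in the proof of Lemma~\ref{simplicial_groupoid}. Then for the four shifted-operad axioms, each reduces to its $\Set$-level counterpart: the associativity identities $(\lambda\circ_i\mu)\circ_{i+j}\nu=\lambda\circ_i(\mu\circ_j\nu)$ and $(\lambda\circ_i\mu)\circ_{k+m}\nu=(\lambda\circ_k\nu)\circ_i\mu$ on objects hold because they hold in the $\Set$-operad; on morphisms they hold because, after the uniform double-inversion, the second-coordinate identities are the corresponding $\Set$-operad identities applied to the $f^{-1},g^{-1},h^{-1}$; and the face-compatibility axioms $d_{i+j}(\lambda\circ_i\mu)=\lambda\circ_i d_j(\mu)$, $d_i(\lambda\circ_k\nu)=d_i(\lambda)\circ_k\nu$, $d_{k+m}(\lambda\circ_i\mu)=d_k(\lambda)\circ_i\mu$ follow from the face-map formula~\eqref{face_groupoid} for $\Gamma_*$ combined with the analogous $\Set$-operad face identities. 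The unit is $[\,*\,,1_0]\in\Gamma_0$.

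I expect the main obstacle to be purely bookkeeping: tracking the index $\sigma^{-1}(i)$ correctly through the conjugation-type relabelling in the second coordinate, since under composition and under the face maps this index gets transformed (e.g.\ to $(\sigma f^{-1})^{-1}(i)=f\sigma^{-1}(i)$), and one must make sure each application of~\eqref{operadic-mult} or of a $\Set$-operad axiom is at the correct index. There is no conceptual difficulty beyond~\eqref{operadic-mult}; accordingly, as with Theorem~\ref{thm:set_operad} and Lemma~\ref{simplicial_groupoid}, I would carry out the routine verifications in Appendix~\ref{ap:routine_verifications} and present only the well-definedness computation and the reduction to~\eqref{operadic-mult} in the main text.
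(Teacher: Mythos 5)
Your proposal is correct and follows essentially the same route as the paper: well-definedness and functoriality of the $\circ_i$ are reduced to the multiplicativity identity~\eqref{operadic-mult}, and the shifted-operad axioms are reduced to their $\Set$-level counterparts from Theorem~\ref{thm:set_operad}. The paper's own proof is just this two-step reduction stated in two sentences, so your version is in fact more explicit (e.g.\ the codomain computation and the index bookkeeping) than what appears in the text.
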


\begin{proof}
    First, one requires that $\circ_i$'s are correctly defined functors of groupoids. This follows exactly from the identity \ref{operadic-mult}. The axioms of a shifted operad follow from Theorem \ref{thm:set_operad}.
\end{proof}
% \begin{remark}
%     It should be pointed out that for $G = B_*$ and $G = S_*$ the constructions, present in the above theorem have alerady been present in the literature. For $G = S_*$ the groupoid $\Gamma(S_*)$ is the standard model for the $ES_\infty$ space. For $G = B_*$, these operads specialize to the operads, described in \cite[Chapter 5]{Fresse-book}
% \end{remark}
\begin{remark}\label{rem:E_ooE_2}
It is worth noting, that for $G_* = B_*$ and $G_* = S_*$, the constructions appearing in the above theorem have already been studied in the literature. In the case $G = S_*$, the spaces $\B\Gamma(S_*) = ES_*$ provide the standard model for the objects of the $E_\infty$ operad. For $G = B_*$, the resulting operad specializes to the model for the $E_2$ operad described in \cite[Chapter~5]{Fresse-book}.
\end{remark}

\subsection{Applying these results}

Let $G_*$ be a monoidal crossed simplicial group. Denote by $\OO(n)$ the $G_\infty$ operad built above, let $T_G$ be its monad $T_G \colon \Top \to \Top$. The first result is a tautology.
\begin{theorem}
    Consider the group completion $T_G(*)^{grp}$. Then there is a weak equivalence 
    $$T_G(*)^{grp} \xrightarrow{\simeq} \Omega \B\left(\bigsqcup_{n \geq 0} \B G_n\right).$$
\end{theorem}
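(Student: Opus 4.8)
The plan is to recognize the right-hand side $\Omega\B\bigl(\bigsqcup_{n\ge0}\B G_n\bigr)$ as the group completion of the topological monoid $M := \bigsqcup_{n\ge0}\B G_n$, and then to identify $M$ with the free $G_\infty$-algebra $T_G(*)$ via a direct comparison of their $n$-th pieces. The key input is that, by definition, $T_G(*) = \bigsqcup_{n\ge0}\bigl(\OO(n)\times_{G_n}*^n\bigr) = \bigsqcup_{n\ge0}\OO(n)/G_n = \bigsqcup_{n\ge0}\B\Gamma_n/G_n$ (using that $\OO(n)=\B\Gamma_n$ with a free $G_n$-action, so the homotopy quotient is the strict quotient). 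By Theorem~\ref{thm:groupoid_construction}, or more precisely Lemma~\ref{quotient_G} applied with $N_*$ the structural quotient of $G_*$, one has $N_n\backslash\B\Gamma_n\simeq K(G_n,1)=\B G_n$; in the present monoidal/operadic setting $N_* = S_*$, so the relevant quotient is by $G_n$ itself (note $\Gamma_n = G_n\sslash P_n$ carries an $S_n$-action whose stabilizer data combine to give exactly the $G_n$-quotient). Hence $T_G(*)(n) := \OO(n)/G_n \simeq \B G_n$ levelwise, and summing over $n$ gives a weak equivalence of spaces $T_G(*)\xrightarrow{\simeq} M$.

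Next I would check that this levelwise equivalence is a map of topological monoids, where $T_G(*)$ carries the monoid structure induced by the operad's ``$\circ_0$''-type multiplication (concatenation of operations) and $M$ carries the strict monoid structure under $\B\boxplus$ furnished by the lemma following Definition~\ref{def:operadic_csg}. This is precisely the content of the compatibility between the operad structure on $\{\Gamma_n\}$ of Theorem~\ref{thm:gpd_operad} and the block-sum $\boxplus$: the operadic composition $\circ_i$ restricted to ``outer'' multiplication $[\sigma,f]\circ_i[\tau,g]$ with the appropriate index is built from $s_i^m$ and $1_i\boxplus(-)\boxplus1_{n-i}$, which on passing to $G_n$-quotients becomes exactly $\boxplus$. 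So the monoid identification is essentially definitional once the groupoid-operad structure is in hand.

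Finally I would invoke the group-completion theorem (in the form for topological monoids, as in May or Segal): for a topological monoid $M$ the natural map $M\to\Omega\B M$ is a group completion, and when $M$ is the free group-like-up-to-completion algebra over an $E_\infty$- or $G_\infty$-operad on a point — i.e. when $\pi_0(M)$ is a free commutative monoid on one generator, here $\pi_0(M)=\mathbb{N}$ — the group completion $M^{\mathrm{grp}}$ agrees with $\Omega\B M$. Combining, $T_G(*)^{\mathrm{grp}}\simeq M^{\mathrm{grp}}\simeq\Omega\B M = \Omega\B\bigl(\bigsqcup_{n\ge0}\B G_n\bigr)$.

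I expect the main obstacle to be the careful bookkeeping in the second step: verifying that the levelwise homotopy equivalence $\OO(n)/G_n\simeq\B G_n$ assembles into a monoid map rather than merely a map of spaces, which requires tracking how the operadic structure maps $\circ_i$ descend to the quotients and matching them with $\B\boxplus$ on the nose (or up to coherent homotopy). The homotopy-theoretic input (group-completion theorem) and the identification of the individual pieces are standard; the friction is entirely in the compatibility of the algebraic structures, which is why the statement is billed as ``a tautology'' — it follows from unwinding the definitions of $T_G$, $\OO(n)$, and $\boxplus$, but the unwinding is not completely trivial.
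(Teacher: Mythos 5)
Your proposal matches the paper's proof, which is precisely the rewriting $T_G(*)^{\mathrm{grp}} = \Omega\B(T_G(*)) = \Omega\B\bigl(\bigsqcup_{n\ge0} EG_n\times_{G_n}*\bigr) = \Omega\B\bigl(\bigsqcup_{n\ge0}\B G_n\bigr)$, prefaced only by the remark that ``the necessary structures align'' (your second step). One small correction: the operad space is $\OO(n)=\B\Gamma_n\langle 1\rangle = EG_n$, the universal cover from Corollary~\ref{thm_corollary}, not $\B\Gamma_n\simeq K(P_n,1)$ itself (on which $G_n$ does not act freely); since the quotient is $\B G_n$ either way, your conclusion is unaffected.
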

\begin{proof}
After checking that necessary structures align we are left with a simple rewriting
    $$T_G(*)^{grp} = \Omega \B(T_G(*)) = \Omega \B\left(\bigsqcup_{n \geq 0} EG_n \times_{G_n} *\right) = \Omega \B\left(\bigsqcup_{n \geq 0} \B G_n\right). \qedhere$$
\end{proof}

% \begin{corollary}
% Applying the theorem above for the crossed simplicial groups $B_*, S_*$ we can obtain the Cohen-Segal theorem and the Baratt-Priddy-Quillen theorems:
%     $$\Omega \B\left(\bigsqcup_{n \geq 0} \B B_n\right) = \Omega^2S^2$$
%     $$\Omega \B\left(\bigsqcup_{n \geq 0} \B \Sigma_n \right) = \Omega^\infty S^\infty.$$
% \end{corollary}

The following statements can be justified by arguments analogous to those in~\cite{Fied-preprint}. Define the $G$-bar construction for a topological monoid $M$ as the composition functor 
$$\B^{G_*}M \colon \Delta G \to \Delta S \to \Top, \quad [n] \mapsto M^{n + 1}.$$
We thus obtain an analogue of Fiedorowicz’s theorem in this more general setting.

\begin{theorem}[cf.~~\cite{Fied-preprint}]
There is a weak equivalence 
    $$\hocolim_{\Delta G} \B^{G_*}M \simeq \B(T_G, C_1, M).$$
\end{theorem}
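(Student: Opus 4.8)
The plan is to follow the strategy of Fiedorowicz's proof of \cite[Proposition~1.4]{Fied-preprint}, replacing the $E_\infty$-- and $E_2$--operads used there with the operad $\B\Gamma_*$ furnished by Theorem~\ref{thm:gpd_operad}. Two preliminary points set the stage. First, the operad map from the associative operad to $\B\Gamma_*$ --- which on each arity picks out the basepoint (the identity-coset object) of the relevant groupoid and is a genuine operad map precisely because $\circ_i$ sends identity cosets to identity cosets, cf.\ Theorem~\ref{thm:set_operad} and Remark~\ref{rem:E_ooE_2} --- gives rise to a map of monads $C_1 \to T_G$, so that the topological monoid $M$, regarded as a $C_1$-algebra, yields a well-defined two-sided monadic bar construction $\B(T_G, C_1, M) = |\,q \mapsto T_G C_1^{\,q} M\,|$. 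Second, $\B^{G_*}M$ is obtained by restricting Fiedorowicz's symmetric bar construction $\B^S M$ along the structural projection $\Delta G \to \Delta S$; together with the functoriality of the groupoids $\Gamma_*$ over $(\Delta N)^{\mathrm{op}}$ from Theorem~\ref{thm:groupoid_construction} and the $\Delta G$-space structure on $\{\B\Gamma_n\langle1\rangle\}_{n\ge0}$ from Corollary~\ref{thm_corollary}, this is what allows the groupoids to enter the homotopy colimit.

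The first step is to present $\hocolim_{\Delta G}\B^{G_*}M$ as the geometric realization of a bisimplicial space. Using that every morphism of $\Delta G$ factors uniquely as an automorphism coming from $G_*$ followed by a morphism of $\Delta$, the simplicial replacement computing this homotopy colimit splits into a ``$\Delta$-direction'', recording the underlying monotone maps --- whose contribution on restriction to $\Delta$ is the usual simplicial bar construction $\B_\bullet M$ with realization $\B M$ --- and a ``group direction'', recording strings of elements of the groups $G_n$. The crossed-homomorphism identities of Lemma~\ref{action_lemma_fied_lod}, i.e.\ the identities already exploited in Lemma~\ref{simplicial_groupoid}, are precisely what makes this bisimplicial object consistent.

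The second step is to realize the group direction. Since the faces $d_i\colon G_n\to G_{n-1}$ are not group homomorphisms, the naive coefficient spaces $\B G_n$ do not assemble into a simplicial space; one must instead work with the functorial free resolutions $\B\Gamma_n\langle1\rangle = EG_n$, which by Corollary~\ref{thm_corollary} form a $\Delta G$-space with free $G_n$-actions. After realizing the group direction in these terms, one is left with a simplicial space built from the groupoids $\B\Gamma_*$, the spaces $M^{\bullet+1}$, and the operadic composition maps of Theorem~\ref{thm:gpd_operad}. The third step identifies this simplicial space, term by term, with $q\mapsto T_G C_1^{\,q}M$: the iterated monotone maps reorganize, by the standard relationship between $\Delta$, the bar construction of a monoid, and the free-monoid monad (exactly as used by Fiedorowicz), into the iterated monad $C_1^{\,q}$ applied to $M$, while the groupoid factors $\B\Gamma_*$ collect via operad composition into a single application of $T_G$; checking that the face operators match the monad multiplication, the $C_1$-action on $T_G$, and the $C_1$-algebra structure of $M$ uses the operadicity axiom of Definition~\ref{def:operadic_csg} and its multiplicative consequence~\eqref{operadic-mult} in the same manner as in the proof of Theorem~\ref{thm:gpd_operad}. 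As all the structure maps involved may be taken to be cofibrations of the appropriate kind --- after the standard fattening of the bar constructions --- a level-wise weak equivalence between the two simplicial spaces, combined with the invariance of geometric realization under such equivalences, yields the desired weak equivalence; specializing to $G_*=S_*$ and $G_*=B_*$ recovers the two equivalences of \cite[Proposition~1.4]{Fied-preprint}.

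The step I expect to be the main obstacle is this passage to the bisimplicial form together with the identification of the correct, twisted coefficient system: making the bisimplicial decomposition genuinely natural in the crossed-simplicial structure, and verifying that realizing the group direction produces the coefficients $\B\Gamma_n\langle1\rangle$ rather than the ill-behaved $\B G_n$. This is precisely where the functorial groupoid model of Theorem~\ref{thm:groupoid_construction} (and its $N_*$-equivariance) is indispensable, and it is essentially the only point at which something beyond a transcription of Fiedorowicz's argument is required; the remaining verifications --- matching monad multiplications, units and equivariance --- are computational and of the same flavour as those collected in Appendix~\ref{ap:routine_verifications}.
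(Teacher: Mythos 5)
Your proposal takes the same route as the paper: the paper gives no argument for this theorem beyond the remark that it ``can be justified by arguments analogous to those in~\cite{Fied-preprint}'', and your sketch is a faithful --- indeed considerably more detailed --- elaboration of exactly that strategy, namely adapting Fiedorowicz's proof of Proposition~1.4 with the operad $\B\Gamma_*$ and the $\Delta G$-equivariant resolutions $\B\Gamma_n\langle 1\rangle$ in place of the $E_\infty$/$E_2$ data. The ingredients you isolate (the monad map $C_1 \to T_G$ via the identity-coset suboperad, the decomposition of $\hocolim_{\Delta G}$ into homotopy quotients over $\Delta$, and the reindexing into $T_G C_1^{\,q} M$) are the right ones and are consistent with the cited argument.
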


And the according corollary.
\begin{corollary}
    For any pointed space $X$ there is the weak equivalence
    $$\hocolim_{\Delta G} \B^{G_*}(JX) \simeq T_GX, $$
    where $JX$ denotes James construction.
\end{corollary}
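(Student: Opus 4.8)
The plan is to deduce the corollary formally from the preceding theorem, by taking $M = JX$ and then collapsing the resulting two-sided bar construction because $JX$ is a free algebra. First recall that the James construction $JX$ is the free topological monoid on the pointed space $X$, and that there is a canonical comparison map of $C_1$-algebras $C_1 X \xrightarrow{\simeq} JX$, a weak equivalence when $X$ is well-pointed (it identifies the homotopy type of the free $C_1$-algebra on $X$ with that of the free topological monoid). Since $JX$ is in particular a topological monoid, the preceding theorem applies with $M = JX$ and gives
\[
\hocolim_{\Delta G}\B^{G_*}(JX)\ \simeq\ \B(T_G, C_1, JX).
\]
As the (proper) two-sided bar construction $\B(T_G, C_1, -)$ is homotopy invariant, the comparison $C_1 X \xrightarrow{\simeq} JX$ upgrades this to $\hocolim_{\Delta G}\B^{G_*}(JX)\simeq \B(T_G, C_1, C_1 X)$, so the corollary reduces to the identification $\B(T_G, C_1, C_1 X)\simeq T_G X$.

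For that identification I would run the classical argument that the two-sided bar resolution of a free algebra is homotopically trivial. Let $\alpha\colon C_1 \to T_G$ be the monad map underlying the right $C_1$-functor structure on $T_G$ (the very structure used to form $\B(T_G, C_1, -)$ in the theorem); it is induced by an operad map $\mathcal{C}_1 \to \B\Gamma_*$, whose underlying associative multiplication is supplied by the unit elements $1_n \in G_n$, which form a sub-operad isomorphic to the associative operad by Theorem~\ref{thm:set_operad}. The simplicial space $[n]\mapsto T_G C_1^{\,n}(C_1 X)$ computing $\B(T_G, C_1, C_1 X)$ then carries the augmentation $\varepsilon = \mu^{T_G}_X\circ T_G\alpha_X\colon T_G(C_1 X)\to T_G X$ together with the front extra degeneracy $s_{-1} = T_G\,\eta^{C_1}$ (insertion of the unit of $C_1$ in the outermost slot). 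A routine verification using only the monad unit axioms for $C_1$ and $T_G$ and the identity $\alpha\circ\eta^{C_1} = \eta^{T_G}$ — this is precisely the computation behind the classical equivalence $\B(C, C, CX)\simeq CX$ — shows that $\varepsilon$ and $s_{-1}$ make the augmented simplicial space extra-degenerate, so the augmentation realises to a deformation retraction $\B(T_G, C_1, C_1 X)\xrightarrow{\simeq} T_G X$. Composing the displayed equivalences proves the corollary; as in~\cite{Fied-preprint}, the case of a general pointed $X$ follows by first replacing $X$ with a well-pointed model.

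The conceptual content is light — essentially ``the bar resolution of a free algebra is trivial'' — so I expect the real work to lie in the homotopical bookkeeping rather than in any new idea: one must check that $JX$ genuinely represents the free $C_1$-algebra \emph{as a $C_1$-algebra}, that the simplicial spaces in play are proper (so that $\hocolim$ is computed by geometric realisation and $\B(T_G, C_1, -)$ is homotopy invariant), and that the operad map $\mathcal{C}_1 \to \B\Gamma_*$ is arranged so that $\alpha\colon C_1 \to T_G$ is an honest morphism of monads. I expect this last package of point-set compatibilities to be the only delicate point; all of it follows the pattern of~\cite{Fied-preprint} and of May's treatment of the two-sided bar construction.
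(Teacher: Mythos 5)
Your proposal is correct and is essentially the argument the paper intends: the paper gives no proof of this corollary, deferring to Fiedorowicz's preprint, where it is deduced from the preceding theorem exactly as you do, via the weak equivalence $C_1X \to JX$ of free $C_1$-algebras (for well-pointed $X$) and the standard collapse $\B(T_G, C_1, C_1X) \simeq T_GX$ of the two-sided bar construction on a free algebra. One small correction to your bookkeeping: for $\B(T_G, C_1, C_1X) \to T_GX$ the extra degeneracy must be the \emph{back} one, inserting $\eta^{C_1}$ adjacent to $X$ (i.e.\ $T_GC_1^{\,n}\eta^{C_1}_{C_1X}$), since the front insertion $T_G\eta^{C_1}$ satisfies $d_0 s_{-1}=\mathrm{id}$ but fails $d_1 s_{-1}=s_{-1}\varepsilon$ at simplicial level $0$ (that would force $T_G\eta^{C_1}_X\circ\lambda_X=\mathrm{id}$); the front extra degeneracy is the one appropriate to $\B(C_1,C_1,A)\to A$, not to $\B(F,C_1,C_1X)\to FX$.
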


% We also mention here a (mostly obvious) observation. First, introduce the subcategory full $\Deltainj G \subset \Delta G$ spanned by automorphisms and injective functions in $\Delta$. Then one can define for a pointed space $X \in \Top_*$ a functor 
% $$B^{free}_GX : \Deltainj G \to \Top_*, \quad [n] \mapsto X^{n + 1},$$
% with the $G_*$ action induced from $S_*$ action. 
    
% \begin{remark}
%     For a pointed space $X \in \Top_*$ one has 
%     $$\XX_G(-) \otimes_{\Deltainj G} B^{free}_G X \simeq T_G X$$
% \end{remark}

\appendix
\section{Routine verifications}\label{ap:routine_verifications}

\begin{lemma}\label{symm_identities}
The following identities hold with regards to the standard simplicial set and operad structures on $S_*$\footnote{Recall that \(S_n\) is the symmetric group \(\mathrm{Bij}([n])\) of rank \(n+1\)}:
\begin{enumerate}
\item For $i < j$ one has 
$$d_i(\sigma)^{-1}(j-1) = \begin{cases} \sigma^{-1}(j) -1 \text{, if } \sigma^{-1}(i) < \sigma^{-1}(j), \\
\sigma^{-1}(j)\text{, if } \sigma^{-1}(j) < \sigma^{-1}(i).\end{cases}$$
\item For $i < j$ one has
$$d_j(\sigma)^{-1}(i) = \begin{cases}
\sigma^{-1}(i) - 1 \text{, if } \sigma^{-1}(j) < \sigma^{-1}(i), \\ 
\sigma^{-1}(i) \text{, if } \sigma^{-1}(i) < \sigma^{-1}(j).\end{cases}$$
\item For $i < j$ one has
$$s_j(\sigma)^{-1}(i) = \begin{cases} \sigma^{-1}(i)+1\text{, if } \sigma^{-1}(j) < \sigma^{-1}(i), \\ \sigma^{-1}(i)\text{, if } \sigma^{-1}(i) < \sigma^{-1}(j).\end{cases}$$    
\item For $\sigma \in S_n, \tau \in S_m$ and for $i \in [n], j \in [m]$ one has
$$(\sigma \circ_{i} \tau)^{-1}(i + j) = \sigma^{-1}(i) + \tau^{-1}(j).$$ 
\item For $i < j$ one has
$$s_i(\sigma)^{-1}(j + 1) = \begin{cases}\sigma^{-1}(j)\text{, if } \sigma^{-1}(j) < \sigma^{-1}(i) \\ 
\sigma^{-1}(j)+1\text{, if } \sigma^{-1}(i) < \sigma^{-1}(j)\end{cases}$$
\end{enumerate}
\end{lemma}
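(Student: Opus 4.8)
The plan is to reduce each of the five identities to the explicit combinatorial description of the face, degeneracy, and operadic partial‑composition operations on $S_*$, and then to extract the stated value by a short two‑case argument. On $S_n=\mathrm{Bij}([n])$ I would first record these operations in the form most convenient here, namely as formulas for the \emph{inverses}. Writing $\delta^{j}\colon[n-1]\hookrightarrow[n]$ for the order‑preserving injection omitting $j$, with partial inverse $(\delta^{j})^{-1}$ (defined on $[n]\setminus\{j\}$ by $l\mapsto l$ for $l<j$ and $l\mapsto l-1$ for $l>j$), the standard face operator of the symmetric crossed simplicial group (cf.~\cite{f0e21457-c8c3-31f9-bd48-762cf5f699ae}) satisfies
\[
d_i(\sigma)^{-1}=(\delta^{\sigma^{-1}(i)})^{-1}\circ\sigma^{-1}\circ\delta^{i},
\]
i.e.\ $d_i(\sigma)^{-1}$ is $\sigma^{-1}$ with $i$ deleted from the source and $\sigma^{-1}(i)$ deleted from the target; likewise $s_i(\sigma)^{-1}$ is obtained from $\sigma^{-1}$ by doubling $i$ in the source and $\sigma^{-1}(i)$ in the target (order‑preservingly on the doubled blocks). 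Unwinding the defining formula $\sigma\circ_i\tau=(1_i\boxplus\tau\boxplus 1_{n-i})\cdot s_i^{\,m}(\sigma)$ of Theorem~\ref{thm:set_operad}, and using that the iterated degeneracy $s_i^{\,m}$ expands the position $i$ into the block $\{i,\dots,i+m\}$, one likewise gets that $(\sigma\circ_i\tau)^{-1}$ is obtained from $\sigma^{-1}$ by blowing up $i$ in the source to $\{i,\dots,i+m\}$, blowing up $\sigma^{-1}(i)$ in the target to $\{\sigma^{-1}(i),\dots,\sigma^{-1}(i)+m\}$, and inserting $\tau^{-1}$ between the two blocks; this is just the block‑substitution description of the $\Set$‑operad $S_*$ (cf.~\cite{Fresse-book}).

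With these descriptions in hand, identity (4) is immediate: $i+j$ is the $j$-th entry of the source block, so $(\sigma\circ_i\tau)^{-1}$ sends it to the $\tau^{-1}(j)$-th entry of the target block, i.e.\ to $\sigma^{-1}(i)+\tau^{-1}(j)$. For (1), (2), (3) and (5) I would substitute the relevant formula and split into the two cases $\sigma^{-1}(i)<\sigma^{-1}(j)$ and $\sigma^{-1}(j)<\sigma^{-1}(i)$, which are exhaustive since $i\neq j$. For instance, for (1) the hypothesis $i<j$ gives $\delta^{i}(j-1)=j$, so $d_i(\sigma)^{-1}(j-1)=(\delta^{\sigma^{-1}(i)})^{-1}(\sigma^{-1}(j))$, which equals $\sigma^{-1}(j)$ when $\sigma^{-1}(j)<\sigma^{-1}(i)$ and $\sigma^{-1}(j)-1$ when $\sigma^{-1}(i)<\sigma^{-1}(j)$. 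Identities (2), (3) and (5) are treated identically: one uses $i<j$ to see that the argument ($i$, resp.\ $j\pm1$) falls outside the index being deleted or doubled, applies $\sigma^{-1}$, and then applies the order‑preserving relabelling, whose $\pm1$ correction is governed by the position of $\sigma^{-1}(j)$ relative to $\sigma^{-1}(i)$.

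I do not expect a real obstacle here: the statement is a bundle of bookkeeping identities, and the only thing needing care is fixing the conventions once and for all — which element of the source and which of the target is removed by $d_i$ (and doubled by $s_i$), and the direction of the order‑preserving relabellings — so that every $\pm1$ shift is applied correctly. Accordingly I would open the argument by pinning down the displayed formulas, after which (1)--(5) follow by the routine substitutions above. This is exactly why the lemma has been placed in the appendix.
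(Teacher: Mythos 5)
Your proposal is correct and is exactly the verification the paper leaves implicit (its entire proof reads ``Straightforward verification''): you pin down the conventions via $d_i(\sigma)=(\delta^{i})^{-1}\circ\sigma\circ\delta^{\sigma^{-1}(i)}$ (which is indeed the one compatible with the crossed-homomorphism identity $d_i(gh)=d_i(g)d_{g^{-1}(i)}(h)$) and its degeneracy/block-substitution analogues, after which each case reduces to whether the order-preserving relabelling shifts $\sigma^{-1}(j)$ past $\sigma^{-1}(i)$. No gaps; you have simply written out what the paper asserts to be routine.
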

\begin{proof}
Straightforward verification.
\end{proof}

\subsection{Verification of simplicial identities in Lemma~\ref{simplicial_groupoid}}
\label{sec:appendix_simplicial_identities}
We prove that simplicial identities hold in $\Gamma_*$. 
\begin{enumerate}
\item The case $d_id_j = d_{j-1}d_i$ for $i < j.$
Fix $[\sigma, f] \in \Gamma_n$. Then,
$$d_id_j [\sigma, f] = d_i[d_j(\sigma), d_{\sigma^{-1}(j)}(f^{-1})^{-1}] = 
[d_id_j(\sigma),
d_{d_j(\sigma)^{-1}(i)}(d_{\sigma^{-1}(j)}(f^{-1}))^{-1}].
$$
The first component clearly satisfies the simplicial identity. For the second, we must demonstrate the equality
\begin{equation}
\label{eq:didj_identity}
d_{d_j(\sigma)^{-1}(i)}(d_{\sigma^{-1}(j)}(f)) = d_{d_i(\sigma)^{-1}(j-1)}(d_{\sigma^{-1}(i)}(f))
\end{equation}
for arbitrary $f$.
This can be verified by considering the two cases $\sigma^{-1}(j) < \sigma^{-1}(i)$ and $\sigma^{-1}(i) < \sigma^{-1}(j)$. 
In the first case $\sigma^{-1}(j) < \sigma^{-1}(i)$, the desired identity~\eqref{eq:didj_identity} transforms to the equation
$$d_{\sigma^{-1}(i) - 1}(d_{\sigma^{-1}(j)}(f)) = d_{\sigma^{-1}(j)}(d_{\sigma^{-1}(i)}(f))$$
using the formulas from Lemma~\ref{symm_identities}.
And this equation is an instance of the standard simplicial identity in the crossed simplicial group $G_*$ for the pair of indices $\sigma^{-1}(j) < \sigma^{-1}(i)$.
In the second case $\sigma^{-1}(i) < \sigma^{-1}(j)$, the desired identity~\eqref{eq:didj_identity} transforms to the equation
$$d_{\sigma^{-1}(i)}d_{\sigma^{-1}(j)} = d_{\sigma^{-1}(j)-1}d_{\sigma^{-1}(i)},$$
which is again standard.
The remaining simplicial identities are verified in Appendix~\ref{sec:appendix_simplicial_identities}.

\item The case $d_is_j = s_{j-1}d_i$ for $i < j$. Fix $[\sigma, f] \in \Gamma_n$. Then,
$$d_is_j[\sigma, f] = [d_is_j(\sigma), d_{s_j(\sigma)^{-1}(i)}s_{\sigma^{-1}(j)}(f^{-1})^{-1}],$$
while
$$s_{j-1}d_i[\sigma, f] = [s_{j-1} d_i(\sigma), s_{d_i(\sigma)^{-1}(j-1)}d_{\sigma^{-1}(i)}(f^{-1})^{-1}].$$
The first component clearly satisfies the simplicial identity. For the second, we must demonstrate the equality 
\begin{equation}
\label{eq:disj_identity}
    d_{s_j(\sigma)^{-1}(i)}s_{\sigma^{-1}(j)}(f) = s_{d_i(\sigma)^{-1}(j-1)}d_{\sigma^{-1}(i)}(f)
\end{equation}
for arbitrary $f$. This can be verified by considering the two cases $\sigma^{-1}(j) < \sigma^{-1}(i)$ and $\sigma^{-1}(i) < \sigma^{-1}(j)$. 

In the first case $\sigma^{-1}(i) < \sigma^{-1}(j)$, the identity in question~\eqref{eq:disj_identity} transforms to the equation
$$d_{\sigma^{-1}(i)}s_{\sigma^{-1}(i)} = s_{\sigma^{-1}(j)-1}d_{\sigma^{-1}(i)},$$
which is an instance of the standard simplicial identity. In the second case $\sigma^{-1}(j) < \sigma^{-1}(i)$, the identity in question transforms to 
$$d_{\sigma^{-1}(i)+1}s_{\sigma^{-1}(j)} = s_{\sigma^{-1}(j)}d_{\sigma^{-1}(i)},$$
which is again an instance of the standard simplicial identity.

\item The case $s_is_j=s_{j+1}s_i$ for $i \leq j$. Fix $[\sigma, f] \in \Gamma_n$. Then,
$$s_is_j[\sigma, f] = s_i[s_j(\sigma), s_{\sigma^{-1}(j)}(f^{-1})^{-1}] = [s_is_j(\sigma), s_{s_j(\sigma)^{-1}(i)}s_{\sigma^{-1}(j)}(f^{-1})^{-1}],$$
while
$$s_{j+1}s_i[\sigma, f] = s_{j+1}[s_i(\sigma), s_{\sigma^{-1}(i)}(f^{-1})^{-1}] = [s_{j+1}s_i(\sigma), s_{s_i(\sigma)^{-1}(j+1)}s_{\sigma^{-1}(i)}(f^{-1})^{-1}].$$
The first component clearly satisfies the simplicial identity. For the second, we must demonstrate the equality 
\begin{equation}
\label{eq:sisj_identity}
    s_{s_j(\sigma)^{-1}(i)}s_{\sigma^{-1}(j)}(f) = s_{s_i(\sigma)^{-1}(j+1)}s_{\sigma^{-1}(i)}(f)
\end{equation}
for arbitrary $f$. This can be verified by considering the three cases $i = j$, $\sigma^{-1}(j) < \sigma^{-1}(i)$ and $\sigma^{-1}(i) < \sigma^{-1}(j)$. 

In the first case $i = j$, the identity in question~\eqref{eq:sisj_identity} transforms to the equation
$$s_{(s_i\sigma)^{-1}(i)}s_{\sigma^{-1}(i)} = s_{(s_i\sigma)^{-1}(i + 1)}s_{\sigma^{-1}(i)},$$
which is an instance of the standard simplicial identity. In the second case $\sigma^{-1}(i) < \sigma^{-1}(j)$, the identity in question~\eqref{eq:sisj_identity} transforms to the equation $$s_{\sigma^{-1}(i)}s_{\sigma^{-1}(j)} = s_{\sigma^{-1}(j)+1}s_{\sigma^{-1}(i)},$$
which is a standard simplicial identity. In the third case $\sigma^{-1}(j) < \sigma^{-1}(i)$, the identity in question~\eqref{eq:sisj_identity} transforms to the equation
$$s_{\sigma^{-1}(i)+1}s_{\sigma^{-1}(j)} = s_{\sigma^{-1}(j)}s_{\sigma^{-1}(i)},$$
which is an instance of the standard simplicial identity.

\item The case $d_is_j = \id$ for $i \in \{j, j + 1\}$. Fix $[\sigma, f] \in \Gamma_n$. Then,
$$d_is_j[\sigma, f] = [d_is_j(\sigma), d_{s_j(\sigma)^{-1}(i)}s_{\sigma^{-1}(j)}(f^{-1})^{-1}].$$
The first component clearly satisfies the simplicial identity. For the second, we must demonstrate the equality 
\begin{equation}
    \label{eq:disi_identity}
    d_{s_j(\sigma)^{-1}(i)}s_{\sigma^{-1}(j)}(f) = \id.
\end{equation}

This can be verified by considering the two cases $i = j$ and $i = j + 1$. In the first case $i = j$, the identity in question~\eqref{eq:disi_identity} transforms to the equation
$$d_{\sigma^{-1}(i)}s_{\sigma^{-1}(i)} = \id,$$
which is an instance of the same standard simplicial identity. In the second case $i = j + 1$ it transforms to the equation
$$d_{\sigma^{-1}(j)+1}s_{\sigma^{-1}(j)} = \id,$$
which is an instance of the same standard simplicial identity.

\item The case $d_is_j = s_jd_{i-1}$ for $i > j + 1$. Fix $[\sigma, f] \in \Gamma_n$. Then, 
$$d_is_j[\sigma, f] = [d_is_j(\sigma), d_{s_j(\sigma)^{-1}(i)}s_{\sigma^{-1}(j)}(f^{-1})^{-1}],$$
while
$$s_jd_{i-1}[\sigma, f] = [s_{j} d_{i-1}(\sigma), s_{d_{i-1}(\sigma)^{-1}(j)}d_{\sigma^{-1}(i-1)}(f^{-1})^{-1}].$$
The first component clearly satisfies the simplicial identity. For the second, we must demon-
strate the equality 
\begin{equation}
    \label{eq:disj2_identity}
    s_{d_{i-1}(\sigma)^{-1}(j)}d_{\sigma^{-1}(i-1)}(f) = s_{d_{i-1}(\sigma)^{-1}(j)}d_{\sigma^{-1}(i-1)}(f)
\end{equation}
for arbitrary $f$. This can be verified by considering the two cases $\sigma^{-1}(j) < \sigma^{-1}(i-1)$ and $$\sigma^{-1}(j) >  \sigma^{-1}(i-1)$$

In the first case $\sigma^{-1}(j) < \sigma^{-1}(i-1)$, the identity in question~\eqref{eq:disj2_identity} transforms to $$d_{\sigma^{-1}(i-1) + 1}s_{\sigma^{-1}(j)} = s_{\sigma^{-1}(j)}d_{\sigma^{-1}(i-1)},$$
which is an instance of the standard simplicial identity. In the second case $\sigma^{-1}(i-1) < \sigma^{-1}(j)$, the identity in question~\eqref{eq:disj2_identity} transforms to $$d_{\sigma^{-1}(i-1)}s_{\sigma^{-1}(j)} = s_{\sigma^{-1}(j)-1}d_{\sigma^{-1}(i-1)},$$
which is an instance of the standard simplicial identity.

\end{enumerate}

\subsection*{Proof of Theorem \ref{thm:set_operad}}
We proceed with a straightforward verification of the axioms of a shifted operad. The structure of a semi-simplicial module is induced by the crossed simplicial group structure on $G_*$. 

\begin{enumerate}
    \item As the element $\id \in G_0$ we take $1 \in G_0$. Then, $\id \circ_0 \nu = \nu \cdot s_0^\ast(1) = \nu$
    \[\id \circ_0 \nu = \nu \cdot s_0^\ast(1) = \nu \text{ and } \mu \circ_i \id = (1_\ast \boxplus 1 \boxplus 1_\ast) \cdot s_i^0(\mu) = \mu,\]
    since $s_L, s_R$ preserve the identity. Thus, the axioms for $\id$ are satisfied.

    \item The case $(\lambda \circ_i \mu) \circ_{i + j} \nu = \lambda \circ_i(\mu \circ_j \nu)$ for appropriate indices. Fix elements $\lambda \in G_l, \mu \in G_m, \nu \in G_n$ and numbers $0 \leq i \leq l$, $0 \leq j \leq m$. Then, 
    \[(\lambda \circ_i \mu) \circ_{i + j} \nu = (1_{i + j} \boxplus \nu \boxplus 1_{l + m - (i + j)})\cdot  s_{i + j}^n(\underbrace{(1_{i} \boxplus \mu \boxplus 1_{l - i})}_{X} \cdot \underbrace{s_i^m(\lambda)}_{ Y}).\]
    Denote the left factor under $s_{i+j}$ above as $X =1_{i} \boxplus \mu \boxplus 1_{l - i}$ and the second as $Y = s_i^m(\lambda)$. Expand $s_{i + j}(XY) = s_{i + j}(X)s_{X^{-1}(i + j)}(Y)$. Then, 
\[s_{i+j}^n(X) = s_{i + j}^n(s_L^i (s_R^{l - i} (\mu))) = s_L^i s_R^{l - i} s_j^n \mu = 1_i \boxplus s_j^n \mu \boxplus 1_{l - i}\]
by the simplicial identities. Since $X^{-1}(i + j) = i + k$ for some $0 \leq k \leq m$, simplicial identities cause the following equality
\[s_{X^{-1}(i + j)}^n = s_{i + k}^ns_i^m(\lambda) = s_i^{m+n}(\lambda).\] Meanwhile, 
  %   $$\lambda \circ_i (\mu \circ_j \nu) = 1 \boxplus_i((1 \boxplus_j \nu \boxplus_{m - j} 1) \cdot s_j^n(\mu)) \boxplus_{l - i} 1) \cdot s_i^{m + n}(\lambda) = $$
  % $$ = (1 \boxplus_{i + j} \nu \boxplus_{l + m - (i + j)} 1) \cdot (1 \boxplus_i s_j^n \mu \boxplus_{l - i} 1) \cdot s_i^{m + n}(\lambda)$$
  
    \begin{align*}
        \lambda \circ_i (\mu \circ_j \nu) &= 1_i \boxplus((1_j \boxplus \nu \boxplus 1_{m - j}) \cdot s_j^n(\mu)) \boxplus 1_{l - i}) \cdot s_i^{m + n}(\lambda)  \\ &= (1_{i + j} \boxplus \nu \boxplus 1_{l + m - (i + j)}) \cdot (1_i \boxplus s_j^n \mu \boxplus 1_{l - i}) \cdot s_i^{m + n}(\lambda)
    \end{align*}
    Thus, the desired identity is equivalent to 

    \begin{align*}
        (1_{i + j} \boxplus \nu \boxplus 1_{l + m - (i + j)})\cdot (1_i \boxplus s_j^n \mu \boxplus 1_{l - i}) \cdot s_i^{m + n}(\lambda) \\ =  (1_{i + j} \boxplus \nu \boxplus 1_{l + m - (i + j)}) \cdot (1_i \boxplus s_j^n \mu \boxplus 1_{l - i}) \cdot s_i^{m + n}(\lambda),
    \end{align*}
    % \[(1 \boxplus_{i + j} \nu \boxplus_{l + m - (i + j)} 1)\cdot (1 \boxplus_i s_j^n \mu \boxplus_{l - i} 1) \cdot s_i^{m + n}(\lambda) = (1 \boxplus_{i + j} \nu \boxplus_{l + m - (i + j)} 1) \cdot (1 \boxplus_i s_j^n \mu \boxplus_{l - i} 1) \cdot s_i^{m + n}(\lambda),\]
    which is tautological.

    \item The case $(\lambda \circ_i \mu) \circ_{k + m} \nu  = (\lambda \circ_k \nu ) \circ_i \mu$ for appropriate indices. Fix elements $\lambda \in G_l, \mu \in G_m, \nu \in G_n$ and numbers $0 \leq i < k \leq l$. Then, 
    \[(\lambda \circ_i \mu) \circ_{k + m} \nu = (1_{k + m} \boxplus \nu \boxplus 1_{l -k})\cdot  s_{k + m}^n(\underbrace{(1_i \boxplus \mu \boxplus 1_{l - i} )}_{X} \cdot \underbrace{s_i^m(\lambda)}_{Y}).\]
    
    Denote the left factor under $s_{i+j}$ above as $X =(1_{i} \boxplus \mu \boxplus 1_{l - i}) $ and the second as $Y = s_i^m(\lambda)$. Expand $s_{i + j}(XY) = s_{i + j}(X)s_{X^{-1}(i + j)}(Y)$. Then,
    \[s_{k + m}^n(X) = s_{k + m}^ns_L^i s_R^{l-i} \mu = s_L^i s_{k + m - i}^n s_R^{l - i} \mu = s_L^i s_R^{l-i + n} \mu = (1_i \boxplus \mu \boxplus 1_{l + n - i}).\]
    Since $X(k + m) = k + m$, the following equality is satisfied: 
    \[(\lambda \circ_i \mu) \circ_{k + m} \nu = (1_{k + m} \boxplus \nu \boxplus 1_{m + l - (k + m)}) \cdot (1_i \boxplus \mu \boxplus 1_{l + n - i}) \cdot s_{k + m}^n s_i^m \lambda.\]
    Note that the first two factors commute by the monoidal crossed group axioms. Meanwhile, 
    \[(\lambda \circ_k \nu ) \circ_i \mu  = (1_i \boxplus \mu \boxplus 1_{l + n - i})\cdot s_i^m(\underbrace{(1_k \boxplus \nu \boxplus 1_{l - k})}_{X}\cdot \underbrace{s_k^n(\lambda)}_{Y}).\]
    Denote the left factor under $s_{i}^m$ above as $X =(1_k \boxplus \nu \boxplus 1_{l - k}) $ and the second as $Y = s_k^n(\lambda)$. Note, that since $i < k$, one has $X(i) = i$. Thus, the following expansion is satisfied $s_i(XY) = s_i(X)s_{i}(Y)$. Applying the simplicial identities, one can derive the following: 
    \[s_i(X) = s_i(s_L^k (s_R^{l-k}(\nu))) = s_L^{k + 1} s_R^{l - k} \nu. \]
    Here, the last equality uses the fact that $k > i$. Thus, the desired identity is equivalent to 

    \begin{align*}
      (1_{k + m} \boxplus \nu \boxplus 1_{l - k}) \cdot (1_i \boxplus \mu \boxplus 1_{l + n - i}) \cdot s_{k + m}^n s_i^m (\lambda ) \\ =  (1_i \boxplus \mu \boxplus 1_{l + n - i})\cdot (1_{k + m} \boxplus \nu \boxplus 1_{l - k})\cdot s_i^ms_k^n(\lambda).  
    \end{align*}

    This follows from the fact that the first two multiples commute by the reasons outlined above and the simplicial identities. 

    \item The case $d_{i + j}(\lambda \circ_i \mu) = \lambda \circ_i d_j(\mu )$ for $0 \leq i \leq l, 0 \leq j \leq m$. Fix elements $\lambda \in G_l, \mu \in G_m, \nu \in G_n$ and numbers $0 \leq i \leq l, 0 \leq j \leq m$. Then, 
    \begin{align*}
    d_{i + j}((1_i \boxplus \mu \boxplus 1_{l - i}) \cdot s_i^m(\lambda)) &= d_{i + j}(1_i \boxplus \mu \boxplus 1_{l - i}) \cdot d_{(1_i \boxplus \mu \boxplus 1_{l - i})^{-1}(i + j)} s_i^m(\lambda)\\ &=(1_i \boxplus d_j(\mu) \boxplus 1_{l-i}) \cdot d_{\mu^{-1}(j) + i}s_i^m(\lambda).     
    \end{align*}
    Meanwhile, 
    \[\lambda \circ_i d_j(\mu) = (1_i \boxplus d_j(\mu) \boxplus 1_{l - i}) \cdot s_i^{m-1}(\lambda).\]
    By simplicial identities, 
    \[s_i^{m-1}(\lambda) = d_{\mu^{-1}(j) + i}s_i^m(\lambda),\]
    and thus the required identity holds. 

    \item The cases $d_i(\lambda \circ_k \nu) = d_i(\lambda) \circ_{k} \nu$ and $d_{k + m}(\lambda \circ_i \mu)  = d_k(\lambda) \circ_i \mu$ for $0 \leq i < k \leq l$. These identities follow from straightforward simplicial computations
    \[d_i((1_k \boxplus \nu \boxplus 1_{l-k})\cdot s_k^n(\lambda)) = (1_{k-1} \boxplus \nu \boxplus 1_{l-k}) \cdot d_i s_k^n(\lambda) = (1_{k-1} \boxplus \nu \boxplus 1_{l-k}) \cdot s_k^n(d_i \lambda)\]
    and 
    \[d_{k + m}(( 1_i \boxplus \mu \boxplus 1_{l-i}) \cdot s_i^m(\lambda)) =  (1_i \boxplus \mu \boxplus 1_{l-i - 1}) \cdot d_{k + m}s_i^m(\lambda).\]
\end{enumerate}

\section{Structure map is a Kan fibration}\label{ap:structure_map_Kan}
Let $G_*$ be a (not necessarily symmetric) crossed simplicial group with the structural projection $\pi\colon G_* \to N_*$. In this appendix we provide a sufficient condition for $\pi$ to be a Kan fibration.

\begin{theorem}\label{thm:structure_map_Kan}
    Let $G_*$ be a {\rm(}not necessarily symmetric {\rm(} crossed simplicial group that admits a $\sSet$-section for the structural projection map $\pi\colon G_* \to N_*$. Then $G_* \xrightarrow{\pi} N_*$ is a Kan fibration. 
\end{theorem}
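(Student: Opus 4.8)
The plan is to use the hypothesised section to exhibit $G_*$ as a product of simplicial sets over $N_*$, thereby reducing the statement to the classical fact that the underlying simplicial set of a simplicial group is a Kan complex.

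First I would record an elementary consequence of Theorem~\ref{structure_thm_fied_lod}: every element of $P_n$ acts trivially on $[n]$. Indeed, since $P_*$ is a \emph{genuine} simplicial group, the face maps restrict to group homomorphisms on each $P_n$, and comparing this with the crossed-homomorphism identity $d_i(p_1p_2)=d_i(p_1)\,d_{p_1^{-1}(i)}(p_2)$ forces $p_1^{-1}(i)=i$ for all $p_1\in P_n$ and all $i$; the same argument applies to the degeneracies. Writing $s\colon N_*\to G_*$ for the given simplicial section, I would then define
\[
\phi\colon P_*\times N_*\longrightarrow G_*,\qquad \phi_n(p,\nu)=p\cdot s(\nu),
\]
where $P_*$ is viewed as a simplicial set and $P_*\times N_*$ carries the product simplicial structure. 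That $\phi$ commutes with faces is the computation
\[
d_i\bigl(p\cdot s(\nu)\bigr)=d_i(p)\,d_{p^{-1}(i)}(s(\nu))=d_i(p)\cdot s(d_i\nu)=\phi_{n-1}(d_ip,\,d_i\nu),
\]
which uses in turn the crossed-homomorphism identity, the triviality of the $P_n$-action just established, and the fact that $s$ is a simplicial map (note $d_ip\in P_{n-1}$ because $\pi(d_ip)=d_i\pi(p)=1$); the degeneracy case is identical. Each $\phi_n$ is a bijection, with inverse $g\mapsto\bigl(g\cdot s(\pi g)^{-1},\,\pi g\bigr)$, which is well defined since $g\cdot s(\pi g)^{-1}\in\ker\pi_n=P_n$. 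Finally $\pi\circ\phi=\mathrm{pr}_{N_*}$, so $\phi$ is an isomorphism of simplicial sets over $N_*$.

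It then remains to observe that the projection $\mathrm{pr}_{N_*}\colon P_*\times N_*\to N_*$ is a Kan fibration, and to transport this conclusion back along $\phi$. This last point is formal: the simplicial set underlying the simplicial group $P_*$ is a Kan complex by Moore's theorem, so given a horn $\Lambda^n_k\to P_*\times N_*$ together with a chosen filler $\Delta^n\to N_*$ of its $N_*$-component, one extends the $P_*$-component across $\Delta^n$ using the Kan condition on $P_*$ and pairs it with the given map to $N_*$ to obtain the required lift. Pulling back along the isomorphism $\phi$ shows that $\pi$ has the right lifting property against every horn inclusion $\Lambda^n_k\hookrightarrow\Delta^n$, i.e. $\pi$ is a Kan fibration.

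I expect no conceptual obstacle here: the content is concentrated in the observation that a \emph{simplicial} section splits the extension as a product of simplicial sets. The only point requiring a little care is the face/degeneracy verification for $\phi$, where it is essential both that the $P_n$-action on $[n]$ is trivial — so that the crossed correction terms $d_{p^{-1}(i)}$ and $s_{p^{-1}(i)}$ collapse to $d_i$ and $s_i$ — and that $s$ is a morphism of simplicial sets rather than merely a levelwise set-theoretic splitting; without the latter, $\phi$ would fail to be simplicial and the product decomposition would break down.
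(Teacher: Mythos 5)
Your argument is essentially the paper's: both use the unique factorization $g = p\cdot s(\pi g)$ with $p\in P_*=\ker\pi$ to reduce horn-filling for $\pi$ to horn-filling in the simplicial group $P_*$, which is Kan by Moore's theorem; you merely package the factorization as a simplicial isomorphism $G_*\cong P_*\times N_*$ over $N_*$, whereas the paper fills the horn directly, checking the compatibilities $d_ip_j=d_{j-1}p_i$ by hand. One caveat: your derivation that $P_n$ acts trivially on $[n]$ does not go through as stated --- from $d_i(p_1p_2)=d_i(p_1)\,d_{p_1^{-1}(i)}(p_2)$ together with the homomorphism property you only obtain $d_{p_1^{-1}(i)}(p_2)=d_i(p_2)$ for $p_2\in P_n$, which does not force $p_1^{-1}(i)=i$ (the constant crossed simplicial group $\ZZ/2$ with the order-reversing action has identity, hence homomorphic, face maps yet a nontrivial action). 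The fact you need is nevertheless true and should simply be quoted from Theorem~\ref{structure_thm_fied_lod}: in the Fiedorowicz--Loday structure theorem, ``$P_*$ is a simplicial group'' means precisely that the induced actions $P_n\curvearrowright[n]$ are trivial, and the paper uses the same fact tacitly when it writes $d_r(p\cdot g)=d_r(p)\,d_r(g)$ for $p\in\ker\pi$.
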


\begin{proof}
    Consider the lifting problem depicted in the diagram below. Denote by $e$ the non-degenerate $n$-simplex of $\Delta^n$. Our goal is to define $\Phi(e) \in G_n$. Let $y_0, \ldots, y_{k-1}, y_{k+1}, \ldots, y_n$ denote the non-degenerate $(n-1)$-simplices of $(\Lambda^n_k)_{n-1}$.
    
    Observe that each element of $G_*$ admits a unique decomposition of the form $p \cdot s$, where $p$ lies in the simplicial subgroup $\ker\{G \xrightarrow{\pi} N\}$ and $s \in \eta(N)$. We seek a solution to the equation 
    \[
        d_r \Phi(e) = y_r
    \]
    of the form $\Phi(e) = p \cdot ((\eta \circ i)(e))$.

    % https://q.uiver.app/#q=WzAsNCxbMCwwLCJcXExhbWJkYV5uX2siXSxbMCwyLCJcXERlbHRhXm4iXSxbMiwwLCJHXyoiXSxbMiwyLCJOXyoiXSxbMSwzLCJpIiwyXSxbMCwxLCJcXGlvdGEiLDJdLFsyLDMsIlxccGkiLDAseyJjdXJ2ZSI6LTF9XSxbMCwyLCJqIl0sWzMsMiwiXFxldGEiLDAseyJvZmZzZXQiOi0xLCJjdXJ2ZSI6LTF9XSxbMSwyLCJcXFBoaSIsMCx7InN0eWxlIjp7ImJvZHkiOnsibmFtZSI6ImRvdHRlZCJ9fX1dXQ==
\[\begin{tikzcd}
	{\Lambda^n_k} && {G_*} \\
	\\
	{\Delta^n} && {N_*.}
	\arrow["j", from=1-1, to=1-3]
	\arrow["\iota"', from=1-1, to=3-1]
	\arrow["\pi", curve={height=-6pt}, from=1-3, to=3-3]
	\arrow["\Phi", dotted, from=3-1, to=1-3]
	\arrow["i"', from=3-1, to=3-3]
	\arrow["\eta", shift left, curve={height=-6pt}, from=3-3, to=1-3]
\end{tikzcd}\]
    
    % Now consider these decompositions for $y_0 = p_0 s_0, \ldots, y_n = p_ns_n$. 

    % Then $d_r \Phi(e) = d_r(p) \cdot d_r((\eta \circ i)(e))$ since $p$ lies in the kernel of $\pi$. Since $\pi(d_r (\eta \circ i)(e)) = \pi(y_r) = s_r$ for each $r \neq k$ we are looking for solutions of the form
    % \begin{equation}\label{Kancomplex}
    %   d_0(p) = p_0, \ldots, d_{k-1}(p) = p_{k-1}, d_{k+1}(p) = p_{k+1}, d_n(p) = p_{n}.  
    % \end{equation}

    % For some $i < j$ consider $d_i y_j = d_i(p_j \cdot s_j) = d_i(p_j)\cdot d_i(s_j) = d_{j-1}y_i = d_{j-1}(p_i)\cdot d_{j-1}(s_i)$. Since the decomposition is unique one has 
    % $$d_i p_j = d_{j-1} p_i \quad \text{ and } \quad d_i s_j = d_{j-1} s_i.$$

    % Since $\ker \{G_* \xrightarrow{\pi} N\}$ is a simplicial group, it is a Kan complex. Thus, the solution for the equation \ref{Kancomplex} exists.
Now consider the decompositions $y_r = p_r s_r$ for $r = 0, \ldots, n$. Since $p$ lies in the kernel of $\pi$, we have 
    \[
        d_r \Phi(e) = d_r(p) \cdot d_r((\eta \circ i)(e)).
    \]
    Furthermore, because $\pi(d_r(\eta \circ i)(e)) = \pi(y_r) = s_r$ for each $r \neq k$, we seek $p$ satisfying
    \begin{equation}\label{Kancomplex}
        d_0(p) = p_0, \; \ldots, \; d_{k-1}(p) = p_{k-1}, \; d_{k+1}(p) = p_{k+1}, \; \ldots, \; d_n(p) = p_n.
    \end{equation}

    For indices $i < j$, consider
    \[
        d_i y_j = d_i(p_j \cdot s_j) = d_i(p_j) \cdot d_i(s_j),
        \quad \text{and} \quad
        d_{j-1} y_i = d_{j-1}(p_i) \cdot d_{j-1}(s_i).
    \]
    By the uniqueness of the decomposition, it follows that 
    \[
        d_i p_j = d_{j-1} p_i 
        \quad \text{and} \quad 
        d_i s_j = d_{j-1} s_i.
    \]
    Since $\ker\{G_* \xrightarrow{\pi} N\}$ is a simplicial group, it is a Kan complex. Therefore, a solution to equation~\eqref{Kancomplex} exists.
\end{proof}

The principal example of the application of this theorem is provided by the crossed simplicial braid group $B_*$. There is a well-established notion of a \emph{permutation braid}; for a detailed account, see ~\cite{Epstein1992WordPI}. 

\begin{definition}
A braid $\beta \in B_n$ is called a \emph{permutation braid} if it admits a positive representative such that any two strands have at most one crossing.
\end{definition}

Permutation braids are in bijective correspondence with the permutations they induce. We denote by
\(p_n\) the bijection sending a permutation to its associated permutation braid.

It follows directly from the geometric characterization above that the assignment of a permutation to its permutation braid defines a morphism of simplicial sets
\[
p_* \colon S_* \longrightarrow B_*,
\]
which is a section of the structural projection $\pi \colon B_* \to S_*$. Consequently, if $G_*$ is a symmetric simplicial group equipped with a morphism $\psi \colon B_* \to G_*$ satisfying $\pi_G \circ \psi = \pi_B$, then the projection $\pi_G$ also admits a section in the category of simplicial sets.

\[
\begin{tikzcd}
	{B_*} && {G_*} \\
	\\
	& {S_*}
	\arrow["\psi", from=1-1, to=1-3]
	\arrow["\pi"', from=1-1, to=3-2]
	\arrow["\pi", from=1-3, to=3-2]
	\arrow["p"', shift left, curve={height=12pt}, dashed, from=3-2, to=1-1]
\end{tikzcd}
\]

The preceding observations yield numerous examples. In particular, we list the following crossed simplicial groups whose structural projections are Kan fibrations:
\begin{enumerate}
    \item the homotopy braid group $\{hBr_{n+1}\}_{n \geq 0}$;
    \item the welded braid group $\{WBr_{n+1}\}_{n \geq 0}$;
    \item the framed braid groups of surfaces $\{Br^{\mathrm{fr}}_{n+1}(M)\}_{n \geq 0}$, as introduced in ~\cite{Krasauskas1996};
    \item the pullback of the diagram $B_n \to S_n \leftarrow \ZZ/(n+1)$.
\end{enumerate}

\printbibliography

@article{f0e21457-c8c3-31f9-bd48-762cf5f699ae,
      ISSN = {00029947},
      URL = {http://www.jstor.org/stable/2001855},
      abstract = {We introduce a notion of crossed simplicial group, which generalizes Connes' notion of the cyclic category. We show that this concept has several equivalent descriptions and give a complete classification of these structures. We also show how many of Connes' results can be generalized and simplified in this framework.},
      author = {Zbigniew Fiedorowicz and Jean-Louis Loday},
      journal = {Transactions of the American Mathematical Society},
      number = {1},
      pages = {57--87},
      publisher = {American Mathematical Society},
      title = {Crossed Simplicial Groups and their Associated Homology},
      urldate = {2025-01-11},
      volume = {326},
      year = {1991}
}

@article{krasauskas_skew-simplicial_1987,
	title = {Skew-simplicial groups},
	volume = {27},
	issn = {1573-8825},
	url = {https://doi.org/10.1007/BF00972021},
	doi = {10.1007/BF00972021},
	pages = {47--54},
	number = {1},
	journaltitle = {Lithuanian Mathematical Journal},
	shortjournal = {Lith Math J},
	author = {Krasauskas, R.},
	urldate = {2025-09-18},
	date = {1987-01-01},
	langid = {english},
}

@article{Fied-preprint,
	title = {The symmetric Bar construction},
	journaltitle = {Preprint},
	shortjournal = {Lith Math J},
	author = {Fiedorowicz, Z.},
	date = {1996},
	langid = {english},
}

@book{Loday-algebraic-op,
    title={Algebraic Operads},
    author={Jean-Louis, Loday and Bruno Vallette},
    isbn={9783642303623},
    series={Grundlehren der mathematischen Wissenschaften},
    year={2012},
    publisher={Springer Berlin, Heidelberg},
}

@book{Fresse-book,
  title     = "Homotopy of Operads and Grothendieck– Teichmüller Groups Part 1:The Algebraic Theory and its Topological Background",
  author  = "Benoist, Fresse",
  year  = 2017,
 publisher = "Amer. Math. Soc",
}

@misc{gonzalezmeneses2010basicresultsbraidgroups,
      title={Basic results on braid groups}, 
      author={Juan Gonzalez-Meneses},
      year={2010},
      eprint={1010.0321},
      archivePrefix={arXiv},
      primaryClass={math.GT},
      url={https://arxiv.org/abs/1010.0321}, 
}

@misc{bardakov2021homotopybraids,
      title={On homotopy braids}, 
      author={V. G. Bardakov and V. V. Vershinin and Jie Wu},
      year={2021},
      eprint={2008.07806},
      archivePrefix={arXiv},
      primaryClass={math.GR},
      url={https://arxiv.org/abs/2008.07806}, 
}

@misc{dehornoy2013foundationsgarsidetheory,
      title={Foundations of Garside Theory}, 
      author={Patrick Dehornoy and Francois Digne and Eddy Godelle and Daan Krammer and Jean Michel},
      year={2013},
      eprint={1309.0796},
      archivePrefix={arXiv},
      primaryClass={math.GR},
      url={https://arxiv.org/abs/1309.0796}, 
}

@article{Bellingeri31122004,
author = {Paolo Bellingeri},
title = {Centralisers in Surface Braid Groups},
journal = {Communications in Algebra},
volume = {32},
number = {10},
pages = {4099--4115},
year = {2004},
publisher = {Taylor \& Francis},
doi = {10.1081/AGB-200029703},
URL = { https://doi.org/10.1081/AGB-200029703},
eprint = { https://doi.org/10.1081/AGB-200029703}
}

@misc{bellingeri2020noterepresentationsweldedbraid,
      title={A note on representations of welded braid groups}, 
      author={Paolo Bellingeri and Arthur Soulié},
      year={2020},
      eprint={2001.04272},
      archivePrefix={arXiv},
      primaryClass={math.GR},
      url={https://arxiv.org/abs/2001.04272}, 
}

@inproceedings{Epstein1992WordPI,
  title={Word processing in groups},
  author={David B. A. Epstein},
  year={1992},
  url={https://api.semanticscholar.org/CorpusID:1504155}
}

@Article{Krasauskas1996,
author={Krasauskas, R.},
title={Crossed simplicial groups of framed braids and mapping class groups of surfaces},
journal={Lithuanian Mathematical Journal},
year={1996},
month={Jul},
day={01},
volume={36},
number={3},
pages={263-281},
issn={1573-8825},
doi={10.1007/BF02986853},
url={https://doi.org/10.1007/BF02986853}
}

@article{Barr2019,
author = {Barr and Michael, Kennison and John F. and Raphael and Robert M.},
journal = {Commentationes Mathematicae Universitatis Carolinae},
keywords = {contractible; homotopic to a constant; reduced homotopy; partial simplicial object},
language = {eng},
number = {4},
pages = {473-495},
publisher = {Charles University in Prague, Faculty of Mathematics and Physics},
title = {Contractible simplicial objects},
url = {http://eudml.org/doc/295068},
volume = {60},
year = {2019}
}

\end{document}